\numberwithin{equation}{section}
\numberwithin{figure}{section}
\newcommand{\T}{\mathbb{T}} 
\DeclareMathOperator{\BBar}{\text{\textbf{\bf Bar}}}
\DeclareMathOperator{\bfT}{\text{\textbf{T}}} 
\DeclareMathOperator{\BV}{\text{\textbf{\bf BV}}} 
\DeclareMathOperator{\B}{\text{\textbf{\bf B}}}
\newcommand{\GL}{\text{GL}}
\newcommand{\rk}{\text{rk}}
\newcommand{\LS}{\rm{LS}}
\newcommand{\iLS}{\rm{iLS}}
\DeclareMathOperator{\TS}{\text{\textbf{\bf TS}}}
\DeclareMathOperator{\dist}{dist}
\newcommand{\norm}[1]{\left \lVert #1 \right \rVert}
\newtheorem{theorem}{Theorem}[section] 
\newtheorem{lemma}[theorem]{Lemma}
\newtheorem{corollary}[theorem]{Corollary}
\newtheorem{proposition}[theorem]{Proposition}
\newtheorem{conjecture}[theorem]{Conjecture}
\newtheorem*{theorem_intro}{Theorem}
\newtheorem*{proposition_intro}{Proposition}
\theoremstyle{definition}
\newtheorem{remark}[theorem]{Remark}
\newtheorem{example}[theorem]{Example}
\newtheorem{definition}[theorem]{Definition}
\definecolor{darkteal}{RGB}{0, 128, 128} 
\title{Discrete signature tensors for persistence landscapes} 
\author{\textsc{\small Vincenzo Galgano, Heather A. Harrington, Daniel Tolosa}}
\small \textit{E-mail address:} {galgano@mpi-cbg.de }; ORCID: \texttt{0000-0001-8778-575X}}\\
\small \textit{E-mail address}: {harrington@mpi-cbg.de }; ORCID: \texttt{0000-0002-1705-7869 }}\\
\small \textit{E-mail address}: {dtolosav@asu.edu }; ORCID: \texttt{0000-0003-0586-9808 } }
\date{}
\begin{document}

\maketitle

\begin{center}
\textit{In memory of Sayan Mukherjee.}\footnote{We dedicate this article to the memory of Sayan Mukherjee. He was a friend and mentor to us and the wider applied topology community. This article reflects Sayan’s vision of bringing statistical and geometric perspectives together to study biological systems. He fostered our curiosity for interdisciplinary mathematics, and his example will continue to guide us as we strive to make the applied topology community more inclusive.}
\end{center}

\begin{abstract}
\noindent Signature tensors of paths are a versatile tool for mathematical data analysis. Recently, they have been applied in the context of vectorisation of persistent homology: after a choice of embedding of barcodes into a space of paths on a vector space, one applies the path signature map, resulting in tensors amenable to statistical and machine-learning methods. Among the different path embeddings, the persistence landscape embedding (PLE) is injective and stable, but PLE is composed with the signature map loses injectivity. Therefore, we address this by proposing a discrete alternative. Persistence landscapes are determined by the time-series of their critical points, of which we compute the discrete signature. We call this composition the {\em discrete landscape feature map} (DLFM), and give results on its injectivity, stability and computability. When studying the injectivity, we complete the proof of a general result due to Diehl, Ebrahimi-Fard and Tapia in the higher-dimensional setting. We showcase the DLFM on a knotted protein dataset, capturing sequence similarity and knot depth with statistical significance. We include an appendix with a preliminary study of Chen signatures of persistence landscapes from the point of view of algebraic geometry. \\
\hfill\break
\noindent {\bf Keywords:} persistent homology, barcodes, persistence landscapes, feature maps, time-series, time warping, path signatures, tensors, knotted proteins, vectorisation. \\
\noindent {\bf MSC 2020 codes:} 55N31, 68T09, 46B85. 
\end{abstract}


{\small \paragraph*{Acknowledgements.} 
Results in Section \ref{subsec:discrete signatures} have benefited from helpful interactions with Joscha Diehl, Kurusch Ebrahimi-Fard and Nikolas Tapia, whom we thank for their kind availability. 
We also thank Agnese Barbensi, Darrick Lee, Vidit Nanda and Leon Renkin for stimulating discussions. 
We are grateful to MPI-MiS Leipzig for hosting the conference \textquotedblleft MEGA 2024\textquotedblright, which informed this research, 
and to MPI-CBG Dresden and CSBD for the excellent working conditions. 
HAH gratefully acknowledges funding from the Royal Society RGF/EA/201074, UF150238 and EPSRC EP/R018472/1, EP/Y028872/1 and EP/Z531224/1. 
VG is member of the Italian national group GNSAGA-INdAM.

\paragraph*{Open Access policy.} The authors have applied a CC-BY public copyright license to any Author Accepted Manuscript (AAM) version arising from this submission. }

\section*{Introduction}

As the scale and complexity of biological data increases, problems such as interpretation, classification and quantification require advanced mathematical methods for investigation. Applying persistent homology from topological data analysis to biological data provides a geometric interpretation of the shapes of data. Here, we propose an alternative approach combining persistent homology and nonlinear algebra. \\
\hfill\break
\indent Persistent homology provides a multiscale geometric descriptor of data that is functorial, stable to perturbations and interpretable: it takes in a filtered simplicial complex that describes the shape of data, and then outputs a persistence module that can be visualised as a {\em barcode}, i.e. a multiset of intervals. Persistent homology has been applied to complex datasets (e.g. neuroscience and biomedicine \cite{gardner2022toroidal,lee2017quantifying,miller2015fruit,benjamin2024multiscale,rabadan2019topological}). In addition to its stability and interpretability, vectorising barcodes is crucial for machine learning integration \cite{ali2023survey}. Some approaches consist of studying the analytical and topological properties of certain continuous paths (e.g. landscape embedding, integrated landscape embedding, Betti curves) associated to landscapes (or barcodes) \cite{Bubenik_2020, Chevyrev_2020, giusti2025topological,giusti2023signatureslipschitzfreespacespaths}. For instance, unlike Betti curves, the (integrated) landscape embedding has been proven to be stable and more meaningful, but still computationally inefficient. \\
\indent In \cite{Chevyrev_2020} the authors proposed a new approach to vectorisation in topological data analysis, leading to {\em feature maps} which associate barcodes to the signature tensors of the aforementioned paths. More precisely, a feature map $\Phi_\bullet$ 
\[
\Phi_\bullet : \BBar \xrightarrow{\iota_\bullet} \BV(V) \xrightarrow{\sigma} \T((V)) 
\]
is given by the composition of a chosen \textit{path embedding} $\iota_\bullet: \BBar \rightarrow \BV(V)$ from barcodes to bounded-variation paths, with the {\em Chen signature map} $\sigma: \BV(V)\rightarrow \T((V))$ associating to a path its signature tensor. {\em Signature tensors} are an algebraic tool firstly introduced by \cite{chen1954iterated} with a rich algebraic and combinatorial structure based on Hopf algebras. A more geometrical approach to signature tensors was firstly proposed in \cite{amendola19}, focusing on related algebraic varieties. Since then, the topic has increasingly attracted the interest of algebraic geometers \cite{amendola2023decomposingtensorspacespath, galuppi2024ranksymmetriessignaturetensors}. A discrete version of signature tensors was introduced by \cite{diehl2020iteratedsumsignature, diehl2020timewarping}, called {\em discrete signature tensor} (or also iterated sum signature in opposition to Chen's iterated integral signature). It associates tensors to time-series, and its combinatorics is ruled by quasi-Hopf algebras. First results on the geometry of discrete signature tensors appear in \cite{bellingeri2025discrete}. 

Our work is inspired by both feature maps from persistent homology and the discrete signature tensors. Among the feature maps proposed in \cite{Chevyrev_2020} the \textit{landscape} and \textit{integrated landscape} path embeddings satisfy desirable stability properties. The main theoretical contribution of this work is a discrete approach to feature maps as an alternative for vectorisation. We introduce the {\em discrete landscape feature map} (DLFM)
\[ \begin{matrix} \Phi_{I} : & \BBar & \stackrel{\iota_{\LS}}{\rightarrow} & \BV(V) & \rightarrow & \TS(V) & \stackrel{\Sigma}{\rightarrow} & \T((\mathscr{V})) \\
& B & \mapsto & \lambda^B & \mapsto & \underline{\bold{x}}^B & \mapsto & \Sigma(\underline{\bold{x}}^B) \end{matrix} \]
which first associates to a barcode $B \in \BBar$ the time-series $\underline{\bold{x}}^B=\{\bold{x}_1,\ldots, \bold{x}_n\} \in \TS(V)$ of critical points of the persistence landscape $\lambda^B \in \BV(V)$ of the barcode, and then the discrete signature tensor $\Sigma(\underline{\bold{x}}^B)\in \T((\mathscr{V}))$ of the time-series (cf. Sec.\ \ref{sec:discrete section} for precise definitions). The map $\BV(V)\rightarrow \TS(V)$ restricted to persistence landscapes is \textquotedblleft canonical\textquotedblright, since persistence landscapes are completely determined by their critical points. Of course, this method may generalise to other path embeddings.\\
\indent We choose the landscape embedding and the discrete signature due to their {\em stability} properties and their {\em discriminative} powers. Indeed, the landscape embedding $\iota_{\LS}$ is stable, unlike other embeddings, and injective (cf.\ \cite[Sec.\ 3.4]{Chevyrev_2020}). However, the discriminative power of $\iota_{\LS}$ gets lost in the feature map $\Phi_{\LS}$, since Chen signature tensors are invariant under tree-like equivalence of paths. We address this by taking a discrete approach, given the discrete nature of the output of persistent homology, and the availability of the software FRUITS (Feature Extraction Using Iterated Sums) proposed in \cite{diehl2024fruits} for efficiently implementing discrete signatures. We show that discrete signature tensors distinguish among tree-like equivalent landscapes, implying that the DLFM $\Phi_I$ is more descriptive than $\Phi_{\LS}$. Alternatively, one may solve the non-injectivity of $\Phi_{\LS}$ by adding a time parameter to $\iota_{\LS}$, but we expect this would have computational limitations. \\
\indent We start our analysis of the discrete feature map $\Phi_I$ by first studying the injectivity of the discrete signature map $\Sigma: \TS(V)\rightarrow \T((\mathscr{V}))$. Such a map is a monoid homomorphism with respect to the shifted-concatenation (cf.\ equation \eqref{def:concatenation time-series}) of time-series, and the tensor product of discrete signature tensors. The discrete signature map is known \cite{diehl2020timewarping} to be invariant under time-warping and time-translation equivalence $\sim_{twt}$. We prove that this equivalence uniquely determines the (monoidal) kernel of $\Sigma$. This result appeared in \cite[Theorem 3.15]{diehl2022tropical} and we fill the gap in their proof from one-dimensional to higher-dimensional time-series.

\begin{theorem_intro}[cf.\ Theorem \ref{thm:uniqueness up to time-warping}]
Any two time-series $\underline{\bold{x}},\underline{\bold{y}} \in \TS(V)$ have the same discrete signature $\Sigma(\underline{\bold{x}}) = \Sigma(\underline{\bold{y}})$ if and only if $\underline{\bold{x}} \sim_{twt} \underline{\bold{y}}$. In particular, the quotient map $\TS(V)/_{\sim_{twt}} \hookrightarrow \T((\mathscr{V}))$ is injective.
\end{theorem_intro}

\noindent We also propose a description of time-warping and time-translation in terms of matrices (cf.\ Remark \ref{rmk:matrix time-warping}).\\
\indent Next, we address the problem of the stability of the discrete feature map $\Phi_{I}$. 
We note that it is not continuous on the whole domain $\BBar$, since the discrete signature map is not invariant under time-series refinements. We overcome this obstacle by restricting to consider a finite set of barcodes $D\subset \BBar$, and consider the union of the critical points of all landscapes arising from $D$. This results in the {\em specialised discrete landscape feature map} $\Phi_{I_D}$. Such an assumption is reasonable considering that most experiments generate finite datasets. 
Moreover, we restrict the image of $\Phi_{I_D}$ to the subspace $\T^{\leq k}((\mathscr{V}))$ of discrete signature tensors truncated up to weight $k$. Again, this restriction is quite natural for applications.\\
\indent The feature map $\Phi_{I_D}$ has desirable continuity properties. Indeed, albeit not being stable in the sense of $1$-Lipschitz, we show that $\Phi_{I_D}$ is uniformly continuous. Here, the metrics involved are the {\em bottleneck distance} on the space of barcodes $\BBar$ (introduced by \cite{cohen2007stability} and further investigated by \cite{bubenik2015statistical}) and the {\em Bombieri-Weyl distance} on the space of discrete signature tensors $\T((\mathscr{V}))$, naturally induced from the standard scalar product on $\mathbb R^d\simeq V$.

\begin{theorem_intro}[cf.\ Theorem \ref{thm:uniform continuity}]
Given $D\subset \BBar$ a finite set of barcodes, the specialised discrete landscape feature map 
\[ \Phi_{I_D}^{\leq k}: \BBar \rightarrow \T^{\leq k}((\mathscr{V})) \]
is uniformly continuous with respect to the bottleneck distance on $\BBar$ and the Bombieri-Weyl distance on $\T^{\leq k}((\mathscr{V}))$.
\end{theorem_intro}

\indent We complement the theoretical study by showcasing the DLFM on a knotted protein dataset previously analysed with persistence landscapes in \cite{benjamin2023homology}. We use the software FRUITS (Feature Extraction Using Iterated Sums) proposed in \cite{diehl2024fruits} for implementing discrete signatures. We show that discrete signatures and structure representative classes (in the sense of amino-acid sequence similarity) are strongly correlated with very high {\em Adjusted Rand Index} (ARI) and {\em Normalised Mutual Information} (NMI) scores. We validate the statistical significance of such correlation by running permutation tests. We also show correlation between discrete signature and knot depth via two statistical methods, and we visualise it by applying Principal Component Analysis (PCA). In our analysis we maintain the labeling of representatives analogous to the one in \cite{benjamin2023homology} to allow comparisons. We explain our analysis in a way intended to be accessible to a broad audience.\\
\hfill\break
\indent Finally, we give a closer look at the classical landscape feature maps and at their images in $\T((V))$. In particular, we focus on the varieties of signature matrices of landscape paths. For this pourpose, as standard in algebraic geometry, we move to the complex field $\mathbb C$ and the projective setting. Our partial description is intended as invitation to algebraic geometers for further study. For instance, since persistence landscape paths are piecewise linear loops, we first describe the signature matrices for loops. 

\begin{proposition_intro}[cf.\ Corollary \ref{cor:loop_variety}]
The projective variety of signature matrices of piecewise linear loops in $\mathbb R^d$ with $m$ maximal segments coincides with the secant variety $\sigma_{\lfloor \frac{m}{2}\rfloor}(Gr(2,d))$ of the Grassmannian of planes in $\mathbb C^d$.
\end{proposition_intro}

\indent This work is organised as follows. Sec.\ \ref{sec:preliminaries} contains the preliminary notions and already known results about signature tensors and persistence feature maps. Sec.\ \ref{sec:discrete section} represents the theoretical core of the manuscript, and contains all constructions and results about the discrete landscape feature map. Sec.\ \ref{sec:application} highlights the utility of the signature-based analysis of biological structures of knotted proteins with respect to both sequence similarity and knot depth. The Appendix \ref{sec:signature for landscapes} contains the discussion of signature tensors of persistence landscape from an algebro-geometric perspective.

\section{Preliminaries}\label{sec:preliminaries}

\subsection{Signature tensors}\label{subsec:prelim_signatures}

Our main references for notions and result appearing in the following are \cite{amendola19, amendola2023decomposingtensorspacespath}. We fix a vector space $V$ over $\mathbb R$ of dimension $d$ and a basis $(e_1,\ldots, e_d)$.

\paragraph{Power series tensor algebra.} 
Given a real vector space $V$, let $\T((V)):= \prod_{k\geq 0}V^{\otimes k}$ be the {\em power series tensor algebra} over $V$, whose elements are sequences of tensors $\mathcal T=\big( T^{(k)} \big)_k$ such that $T^{(k)} \in V^{\otimes k}$, and whose inner product is given by the tensor product $\mathcal T \otimes \mathcal T' = \big(\sum_{i+j=k}T^{(i)} \otimes (T')^{(j)}\big)_k$. Equivalently, by looking at elements as power series $\mathcal T=\sum_{k\geq 0}T^{(k)}$, the inner product is the multiplication of power series. For any $k$ we write $T^{(k)}_{i_1,\ldots, i_k} \in \mathbb R$ for the $(i_1,\ldots, i_k)$-th entry of the tensor $T^{(k)}$ with respect to the fixed basis of $V$, so that $T^{(k)}=\sum_{I \in [n]^k} T^{(k)}_{i_1,\ldots, i_k}e_{i_1}\otimes \ldots \otimes e_{i_k}$. We denote the multiplicative subgroup of $\T((V))$ of power series with degree--$0$ term equal to $1$ by
\begin{align*}
\T_1((V)) & := \{ \mathcal T \in \T((V)) \ | \ T^{(0)}=1 \}  \ .
\end{align*}

\paragraph{Path signature.} Let $\BV([0,1],V)$ be the set of bounded-variation continuous paths $\gamma: [0,1]\rightarrow V$. The {\em path signature} is the map
\begin{equation}\label{eq:signature path}
\begin{matrix}
\sigma : & \BV([0,1],V) & \longrightarrow & \T_1((V)) \\
& \gamma & \mapsto & \sigma(\gamma) = \left(\sigma^{(k)}(\gamma)\right)_k
\end{matrix}\end{equation}
such that $\sigma^{(0)}(\gamma)=1$ by convention, and $\sigma^{(k)}(\gamma):=\int_{\Delta_k}d\gamma^{\otimes k}$ is the Riemann-Stieltjes integral of $d\gamma^{\otimes k}:= d\gamma(t_1)\otimes \ldots \otimes d\gamma(t_k)$ over the $k$--dimensional standard simplex $\Delta_k$. More practically, the coefficient of $e_{i_1}\otimes \ldots \otimes e_{i_k}$ in the tensor $\sigma^{(k)}(\gamma) \in V^{\otimes k}$ is (cf.\ \cite[Secc.\ 2,3]{chen1954iterated})
\begin{equation}\label{eq:def signature}
    \sigma^{(k)}(\gamma)_{i_1,\ldots, i_k} = \int_{0}^1 \int_{0}^{t_k} \cdots \int_{0}^{t_2}\dot\gamma_{i_1}(t_1)\cdots \dot\gamma_{i_k}(t_k) dt_1\cdots dt_k  \ .
    \end{equation}
The sequence $\sigma(\gamma)\in \T((V))$ is the {\em signature tensor} of $\gamma$, and its $k$--th entry $\sigma^{(k)}(\gamma) \in V^{\otimes k}$ is the {\em order-$k$ signature tensor} of $\gamma$. \\
\indent The path signature $\sigma$ is a multiplicative homomorphism with respect to the concatenation of paths in $\BV([0,1],V)$ (which we denote by $\ast$) and the tensor product in $\T_1((V))$. This is a consequence of the Chen relation \cite[Theorem 3.1]{chen1954iterated}
\begin{equation}\label{eq:chen relation}
    \sigma(\gamma_1 \ast \gamma_2)=\sigma(\gamma_1)\otimes \sigma(\gamma_2) \ .
    \end{equation}

\noindent Moreover, $\sigma$ quotients to an injective homomorphism 
\begin{equation}\label{eq:injective up to tree-like}
\sigma: \ \BV([0,1],V)/\sim_{tl} \ \hookrightarrow \ \T((V))
\end{equation}
with respect to the {\em tree-like equivalence}, for whose definition we refer to \cite[Sec.\ 4]{Chevyrev_2020}. However, \cite[Corollary 1.5]{Hambly_2010} shows that one can take $\gamma_1 \sim_{tl} \gamma_2 \iff \sigma(\gamma_1\ast \gamma_2^{-1})=(1,\mathbf{0},\ldots)$ as definition. 

\begin{remark}
The name {\em tree-like} equivalence comes from the fact that in a tree (say, path of segments without loops) one can always iteratively retract leaves up to a point, giving a trivial signature.
\end{remark}

\begin{remark}\label{remark:group invariance}
The natural action of $\GL(V)$ on $V$ induces an entry-wise action on each $V^{\otimes k}$, hence on $\T_1((V))$, as well as on $\BV([0,1],V)$ where $(g \cdot \gamma)(t)= g \cdot (\gamma(t))$. In particular, the path signature map $\sigma$ is $\GL(V)$--equivariant, that is $g \cdot \sigma(\gamma) = \sigma(g \cdot \gamma)$. Finally, signatures are invariant under translations and reparametrisations of paths. We will therefore always consider paths based at the origin, i.e. $\gamma(0)=\bold{0}\in V$.
\end{remark}

\paragraph*{Signatures of PWL paths.} We refer to {\em piecewise linear} (PWL) paths as continuous paths $\gamma: [0,L] \rightarrow V$ which are concatenation of finitely many segments $\gamma= \alpha_1 \ast \ldots \ast \alpha_m$, where $\alpha_j(t)=\bold{a}_j \cdot t$ for certain $\bold{a}_j \in V$: in the expression of $\gamma$, the paths $\alpha_j(t)$ are implicitly shifted so that the concatenation makes sense. The partition $\gamma=\alpha_1\ast \ldots \ast \alpha_m$ is unique if we require that the segments $\alpha_1, \ldots, \alpha_m$ have maximal length possible, or equivalently that $m$ is minimal. 

\begin{definition}\label{def:minimal segment decomp}
We refer to such partition as the \textit{minimal segment decomposition} (MSD) of the path, and we say that the path $\gamma$ is $m$-piecewise linear ($m$--PWL).
\end{definition}

\indent Signatures of PWL paths have a particularly nice expression in terms of their segment decomposition. The $k$--th signature tensor of a segment path $\alpha(t)=\bold{a}\cdot t$ for $\bold{a}\in V$ is always the symmetric rank--$1$ tensor determined by $\bold{a}$, that is \[\sigma(\alpha) = \left(1, L\bold{a}, \frac{L^2}{2}\bold{a}^{\otimes 2}, \ldots, \frac{L^k}{k!}\bold{a}^{\otimes k}, \ldots \right) = \exp(L\bold{a}).\] In fact, segments (up to tree-like equivalence) are the only paths with rank $1$ signature. Then by Chen's relation \eqref{eq:chen relation} the signature tensor of a $m$--PWL path $\gamma= \alpha_1\ast \ldots \ast \alpha_m$ is 
\begin{equation}\label{eq:Chen on PWL paths}
    \sigma(\gamma) = \exp(L\bold{a}_1) \otimes \ldots \otimes \exp(L\bold{a}_m) \ . 
    \end{equation}

\subsection{Persistence feature maps}\label{subsec:preliminaries landscapes}

A \textit{barcode} is a multiset of intervals of the form $B:=\{ [b_1,d_1), ... , [b_n,d_n)\}$,
where $b_i\leq d_i$ are non-negative real numbers called {\em birth} and {\em death}. Barcodes, or equivalently persistence diagrams, are the standard output of persistent homology, the flagship method in topological data analysis. We refer to \cite{Otter2017Roadmap} for a basic introduction to the topic. In this paper, we will assume barcodes to be {\em tame}, that is they have only finitely many intervals, and all intervals are contained in a certain $[0,L]$. \\
\indent From \cite{bubenik2015statistical} we recall that any barcode $B$ defines a family of continuous functions $\lambda_k^B: \mathbb{R}\rightarrow \mathbb{R}$ such that
\begin{equation}\label{eq:def level}
\lambda_k^B(t) := \sup \{  s\geq0 \,\vert\, \beta^{t-s,t+s} \geq k   \},
\end{equation}
where $\beta^{t-s,t+s}$ is the number of intervals in $B$ that contain $[t-s,t+s]$, and where one sets $\lambda_k^B(t) = 0$ whenever the supremum is taken over the empty set (cf.\ \cite[Definition 3.2]{Chevyrev_2020}). The function $\lambda_k^B(t)$ is called the \textit{$k$--th landscape} of the barcode $B$ \cite{Bubenik_2020_landscapes}, or {\em $k$--th level} of $B$.

\begin{remark} 
A graphical construction of the landscape of a barcode is as follows. Let $B$ be a barcode with bars $(B_1,\ldots ,B_n)$. For any $i \in [n]$, consider the birth-death point $P_i:=(b_i,d_i)\in \mathbb R^2$ such that $B_i$ is supported on $[b_i,d_i]$. By definition, each birth-death point lies above the diagonal in $\mathbb R^2$, and defines a right triangle whose hypotenus lies on the diagonal and whose catheti meet in the point itself. After isometries of the plane, one consider all these hypotenuses lying on the interval $[0,L]$ along the $x$--axis. The persistence landscape of $B$ is the union of the sides of such triangles. 
\end{remark}

\noindent By construction, levels of a landscape $\lambda^B$ are PWL paths $\lambda_j^B:[0,L]\rightarrow \mathbb R$, such that 
\begin{itemize}
\item they are loops: $\lambda_j^B(0)=\lambda_j^B(L)=0$;
\item higher levels dominate lower levels: $\lambda_i^B(t)\geq \lambda_j^B(t)$ for any $t \in [0,L]$ and $i \leq j$;
\item for any $j \in [d]$ the segments of $\lambda_j^B$ have slope in $\{0, \pm 1\}$.
\end{itemize}

\noindent We write $\lambda^B=(\lambda_1, \ldots, \lambda_d)$ by non specifying $B$ at each level when it is clear.

\begin{figure}[H]
\begin{center}
\begin{tikzpicture}[scale=1, every node/.style={font=\small}]

    \draw[thick, black] (0, 0) -- (2.5, 2.5) -- (4,1) -- (5,2)-- (5.75 ,1.25) -- (7,2.5) -- (9.5,0);

    \draw[dashed,thick, blue] (0, 0) -- (3,0) -- (4,1) -- (4.75,0.25) -- (5.75,1.25) -- (7,0) -- (9.5,0);

    \draw[dotted,thick, red] (0, 0) -- (4.5, 0) -- (4.75, 0.25) -- (5,0) -- (9.5 , 0);



\end{tikzpicture}
\caption{{\footnotesize Landscape with $3$ levels: $\lambda_1$ in black, $\lambda_2$ in blue, $\lambda_3$ in red.}}
\end{center}
\end{figure}

\indent In the need of vectorising barcodes, several ways of associating paths in a vector space to barcodes have been introduced under the name of {\em embeddings}. Moreover, in order to obtain a more informative vectorisation, in \cite{Chevyrev_2020} the authors composed such embeddings with the Chen signature map \eqref{eq:signature path}: an embedding $\iota_\bullet : \BBar \rightarrow \BV([0,L],\mathbb R^d)$ defines the {\em feature map}
\[ \Phi_\bullet : \BBar \stackrel{\iota_\bullet}{\longrightarrow} \BV([0,L],\mathbb R^d) \stackrel{\sigma}{\longrightarrow} \T_1((\mathbb R^d)) \ . \]
\noindent Note that the feature map $\Phi_\bullet$ quotients to an injective map under the tree-like equivalence. In the following we focus on two feature maps, but we refer to \cite{Chevyrev_2020} for a more complete overview.\\
\indent The \textit{landscape embedding} is the map associating to each barcode $B$ (with at most $d$ levels) a path $\lambda^B$ in $\mathbb R^d$ whose $k$--entry is the $k$--th level of $B$, namely
\begin{equation}\label{eq:landscape embedding}
\begin{matrix}
    \iota_{\LS} : & \BBar & \longrightarrow & \BV([0,L],\mathbb R^d) \\
    & B & \mapsto & \lambda^B(t) & = \scriptsize{\begin{bmatrix} \lambda_1^B(t) \\ \vdots \\ \lambda_d^B(t) \end{bmatrix}}
    \end{matrix} \ .
    \end{equation}

\noindent Such vectorisation is not as descriptive as one would desire. The tree-like equivalence of paths makes impossible to distinguish large classes of landscapes. For example, the empty barcode, a singleton barcode $\{ [b,d) \}$ and any barcode $\{ [b_1,d_1),...,[b_n,d_n) \}$ such that $b_i > d_{i-1}$, $i=2,...,n$, define tree-like paths. Hence, the feature map $\Phi_{\LS}$ maps each of them to the trivial signature $(1,\bold{0}, \ldots)$.

\begin{figure}[H]
    \centering
    \includegraphics[width=0.25\linewidth]{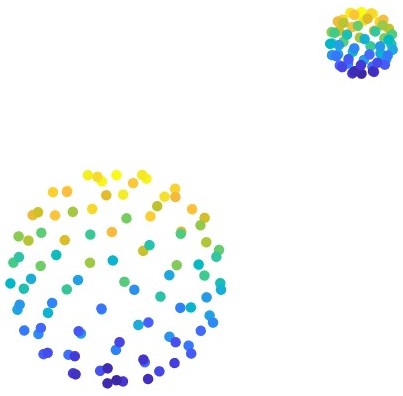}%
    \hspace{1.5cm}
    \begin{tikzpicture}[scale=1]
    \draw[thick, blue] (0,0) -- (1,1) -- (2,0) -- (3,0) -- (5,2) -- (7,0) ;
    \end{tikzpicture}
    \caption{{\footnotesize Uniform sampling, called {\em Fibonacci sampling}, of two spheres (left) has a one-dimensional associated persistence landscape (right) with trivial Chen signature.}}
    \label{fig:two_spheres}
\end{figure}

A more descriptive embedding is given by a cumulative version of $\iota_{\LS}$, which distinguishes more features of barcodes at the cost of an additional integration step. The {\em integrated landscape embedding} is the map
\begin{equation}\label{eq:integrated landscape embedding}
\begin{matrix}
    \iota_{\iLS} : & \BBar & \longrightarrow & \BV([0,L],\mathbb R^d) \\
    & B & \mapsto & F^{\lambda^B}(t) & := \small{\begin{bmatrix} \int_0^t\lambda_1^B(s)ds \\ \vdots \\ \int_0^t\lambda_d^B(s)ds \end{bmatrix}}
    \end{matrix} \ .
    \end{equation}
The key property of the resulting feature map $\Phi_{i\LS}$ is its stability (under perturbations of the barcodes). However, the extra integration step makes it computationally inefficient.\\
\indent Some initial work has been done to describe invariants of signature tensors \cite{galuppi2024ranksymmetriessignaturetensors,amendola2023decomposingtensorspacespath} and it would be interesting to relate these invariants to the topology of barcodes via the feature maps. For instance, the only rank--$1$ signature tensors are signatures of segments \cite[Corollary 7.3]{amendola2023decomposingtensorspacespath}. In the case of the landscape feature map $\iota_{\LS}$, the latter result leads to the following.

\begin{proposition}\label{prop:landscape_rank1}
    Given a barcode $B$, the signature tensor $\Phi_{\LS}(B)=\sigma(\lambda^B)$ has rank $1$ ---i.e. for any $k\geq 0$ the $k$--th signature $\sigma^{(k)}(\lambda^B)$ has tensor-rank $1$ in $V^{\otimes k}$--- if and only if the barcode $B$ is trivial ---i.e. it consists of diagonal birth-death points $[b_i,b_i]$.     
\end{proposition}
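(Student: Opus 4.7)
The plan is to prove the two implications separately, using the cited rank-$1$ characterisation of path signatures together with the loop structure of persistence landscapes.

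The implication ($\Leftarrow$) is essentially immediate. If every bar $[a_i,a_i]$ is diagonal, then for any $t$ and any $s>0$ no bar contains the interval $[t-s,t+s]$, so $\beta^{t-s,t+s}=0$; hence each $\lambda_k^B(t)$ is a supremum over the empty set and equals $0$. Consequently $\lambda^B\equiv \mathbf{0}$ and $F^{\lambda^B}\equiv \mathbf{0}$, both with Chen signature $(1,\mathbf{0},\ldots)$, which is the degenerate rank-$1$ case corresponding to the zero segment $\mathbf{a}=\mathbf{0}$.

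For ($\Rightarrow$), I invoke \cite[Corollary~7.3]{amendola2023decomposingtensorspacespath}: a bounded-variation path has rank-$1$ signature at every order if and only if it is tree-like equivalent to a segment $\alpha(t)=\mathbf{a}t$. Applied to $\lambda^B$ this yields $\lambda^B\sim_{tl}\alpha$ for some $\mathbf{a}\in V$. Tree-like equivalence preserves the first-order signature, and each level $\lambda_k^B$ is a loop based at the origin, so $\sigma^{(1)}(\lambda^B)=\mathbf{0}$; this forces $L\mathbf{a}=\mathbf{0}$, hence $\mathbf{a}=\mathbf{0}$ and $\sigma(\lambda^B)=(1,\mathbf{0},\ldots)$. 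In other words, $\lambda^B$ is tree-like equivalent to the constant zero path. The remaining step, passing from this to $\lambda^B\equiv\mathbf{0}$ and therefore to $B$ trivial, uses the rigid combinatorics of landscapes: each $\lambda_k^B$ is a non-negative piecewise-linear loop with slopes in $\{0,\pm 1\}$, and the levels are dominated pointwise by lower ones. Combined with the trivial signature, these constraints force every level to vanish identically, leaving only diagonal bars.

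The integrated-landscape case runs in parallel. Rank-$1$ signature yields $F^{\lambda^B}\sim_{tl}\alpha$ with $\alpha(t)=\mathbf{a}t$, and since $F^{\lambda^B}(0)=\mathbf{0}$ and is coordinate-wise non-decreasing, the endpoint satisfies $\int_0^L\lambda^B(s)\,ds=L\mathbf{a}$. Using the non-negative PWL structure of each $\lambda_k^B$ one then argues $\mathbf{a}=\mathbf{0}$, so $\int_0^L\lambda^B(s)\,ds=\mathbf{0}$, and non-negativity of $\lambda^B$ gives $\lambda^B\equiv\mathbf{0}$, hence $B$ trivial. I expect the main technical obstacle to be precisely this last step in both cases, namely extracting the pointwise vanishing of $\lambda^B$ from the rank-$1$/trivial-signature hypothesis; this is where the combinatorial constraints of landscapes (slopes, level domination, non-negativity) must be used decisively beyond the first-order information.
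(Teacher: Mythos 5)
The paper states this proposition without a written proof, so the only thing to compare against is the intended one-line argument (apply \cite[Corollary 7.3]{amendola2023decomposingtensorspacespath}), which your proposal correctly takes as its starting point. Your backward direction is fine, but the forward direction has a genuine gap at exactly the step you flag as the \textquotedblleft main technical obstacle\textquotedblright, and that step is not merely hard --- it is false as stated. After reducing to \textquotedblleft$\lambda^B$ is tree-like equivalent to the constant path\textquotedblright, you assert that non-negativity, slopes in $\{0,\pm1\}$ and level domination force $\lambda^B\equiv\mathbf{0}$. They do not: Sec.~\ref{subsec:preliminaries landscapes} of the paper itself exhibits non-trivial barcodes whose landscape paths are tree-like and hence have trivial signature. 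For the singleton $\{[b,d)\}$ with $b<d$, the path $\lambda^B(t)=(\lambda_1(t),0,\ldots,0)$ is a tent that retraces itself along the $e_1$-axis; it satisfies every one of your combinatorial constraints, has signature $(1,\mathbf{0},\ldots)$, and is not identically zero. So \textquotedblleft trivial signature $\Rightarrow\lambda^B\equiv\mathbf{0}$\textquotedblright\ cannot be established, and the forward implication does not go through without strengthening or reinterpreting the hypothesis (e.g.\ what \textquotedblleft rank $1$\textquotedblright\ means for the zero tensor).

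The integrated case breaks in a slightly different place. You claim the non-negative PWL structure forces $\mathbf{a}=\mathbf{0}$, but for the same singleton barcode $F^{\lambda^B}$ is a monotone path supported on the $e_1$-axis, so its order-$k$ signature is $\frac{1}{k!}\bigl(\int_0^L\lambda_1\bigr)^k e_1^{\otimes k}$, a genuinely nonzero rank-$1$ tensor for every $k$, with $\mathbf{a}=\bigl(\int_0^L\lambda_1\bigr)e_1\neq\mathbf{0}$; neither $\mathbf{a}=\mathbf{0}$ nor triviality of $B$ follows. In short, invoking the segment characterisation of rank-$1$ signatures is the right (and intended) first move, but the decisive step --- passing from \textquotedblleft tree-like equivalent to a segment\textquotedblright\ to \textquotedblleft$B$ is trivial\textquotedblright\ --- is asserted rather than proved, and the examples above show it cannot be supplied in the generality claimed.
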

\begin{proof}
 From \cite{amendola2023decomposingtensorspacespath}, we know that the signature $\Phi_{\LS}(B)$ has rank $1$ if and only if $\lambda^B$ is a segment. But $\lambda^B$ is a segment if and only if all birth-death pairs of the barcode lie along the diagonal, that is they are of the form $[b_i,b_i]$.
\end{proof}

\section{Discrete feature maps}\label{sec:discrete section}

A discrete version of signature tensors, replacing paths with time-series, was introduced in \cite{diehl2020iteratedsumsignature,diehl2020timewarping}, and first glimpses of the related geometry appear in \cite{bellingeri2025discrete}. Before applying such discrete approach to persistent homology, first we recall definitions and propose a matrix description of the equivalence relation for time-warping and time-translation. Finally, we prove that discrete signatures recover time-series {\em  uniquely up to} time-warping and time-translation, fixing a gap in the proof of \cite[Theorem 3.15]{diehl2022tropical}.

\subsection{Discrete signatures of time-series}\label{subsec:discrete signatures}

Given the polynomial ring $\mathbb R [\bold{Z}]=\mathbb R[Z_1,\ldots, Z_d]$, consider its quotient by the constant polynomials
\[ \mathscr{V}_d = {\mathbb{R}[Z_1,\ldots, Z_d]}/{\mathbb R} \ . \]
As (infinite dimensional) vector subspace, $\mathscr{V}_d$ is generated by all possible monomials in the $d$ variables. We consider the tensor algebra $\T(\mathscr{V}_d)$ and the power series tensor algebra $\T((\mathscr{V}_d))$: note that $\T((\mathscr{V}_d))$ can be identified to the dual space $\T(\mathscr{V}_d)^\vee$.\\
\indent Let $\underline{\bold{x}}=(\bold{x}_1,\ldots , \bold{x}_n)$ be a time-series of $n\geq 1$ vectors (we say, length $n$) in $\mathbb{R}^d$. We denote the set of all time-series (of any positive length) in $\mathbb{R}^d$ by $\TS(\mathbb R^d)$. The {\em discrete signature map} is the map

\begin{equation}\label{eq:discrete signature map}
\begin{matrix}
\Sigma : & \TS(\mathbb R^d) & \longrightarrow & \T((\mathscr{V}_d))\\
& \underline{\bold{x}} & \mapsto & \Sigma(\underline{\bold{x}})
\end{matrix}
\end{equation}
such that, for any time-series $\underline{\bold{x}}=(\bold{x}_1,\ldots, \bold{x}_n)$ of length $n$, the {\em discrete signature} $\Sigma(\underline{\bold{x}})$ is the element of the tensor algebra $\T((\mathscr{V}_d))$ defined as follows: for any $\ell \geq 0$ and any family of monomials $p_1,...,p_\ell \in \mathscr{V}_d$, the coefficient of $p_1\otimes \ldots \otimes p_\ell$ in $\Sigma(\underline{\bold{x}})$ is

\begin{equation}\label{eq:discrete signature}
    \langle \Sigma(\underline{\bold{x}}), p_1 \otimes \cdots \otimes p_\ell \rangle = \begin{cases}
    \sum\limits_{1\leq i_1<...<i_\ell\lneq n} p_1(\bold{x}_{i_1+1}-\bold{x}_{i_1}) \cdots p_\ell(\bold{x}_{i_\ell+1}-\bold{x}_{i_\ell}) & \ell \leq n - 1 \\
    0 & \ell \geq n
    \end{cases}
    \end{equation}

\noindent and $\langle \Sigma(\underline{\bold{x}}), \epsilon \rangle = 1$ where $\epsilon$ corresponds to the empty word. The above expression is a generalisation of the notion of quasi-symmetric polynomials \cite[Remark 3.5]{diehl2020timewarping}. For simplicity, we fix the notation
\[ C_{p_1\otimes \ldots \otimes p_\ell}^{\underline{\bold{x}}} \ := \ \langle \Sigma(\underline{\bold{x}}), p_1\otimes \ldots \otimes p_\ell\rangle \ \in \mathbb R \ . \]
\noindent The {\em weight} of an element $p_1\otimes \ldots \otimes p_\ell$ is the sum of the degrees of the monomials $p_1,\ldots,p_\ell$, that is 
\begin{equation}\label{eq:weight}
    weight(p_1\otimes \ldots \otimes p_\ell) := \deg(p_1)+\ldots + \deg(p_\ell) \ . 
    \end{equation}
\noindent The truncation {\em up to weight $h$} of the discrete signature $\Sigma(\underline{\bold{x}})$ is 
\[ \Sigma^{\leq h}(\underline{\bold{x}}) := \sum_{weight(p_1\otimes \ldots \otimes p_\ell)\leq h} C_{p_1\otimes \ldots \otimes p_\ell}^{\underline{\bold{x}}}p_1\otimes \ldots \otimes p_\ell \ . \]
\indent An important feature of the (standard) signature map is being invariant under time alteration (i.e. reparametrisation of the curve). By definition, the discrete signature carries a similar property, known as {\em time-warping invariance} \cite{diehl2020timewarping}: it is invariant under {\em consecutive} time-point repetitions, that is
\[ \Sigma((\bold{x}_1, \ldots, \bold{x}_{j-1}, \bold{x}_j, \bold{x}_j , \bold{x}_{j+1}, \ldots, \bold{x}_n)) = \Sigma((\bold{x}_1, \ldots, \bold{x}_{j-1}, \bold{x}_j, \bold{x}_{j+1}, \ldots, \bold{x}_n)) \ . \]
\noindent We say that a time-series is {\em time-warping reduced} (TWR) if it does not present consecutive repetitions. It is clear from the definition that any time-series has a unique associated TWR series so that their signatures coincide.\\
\indent As the signature path \eqref{eq:signature path} is injective modulo tree-like equivalence of paths, it is quite natural to ask under which equivalence relation the discrete signature \eqref{eq:discrete signature map} is injective as well. From the above, we know that time-warping is an invariant property. Moreover, again by definition \eqref{eq:discrete signature}, the discrete signature is also invariant under {\em time-translation}:
\[ \Sigma((\bold{x}_1, \ldots, \bold{x}_n)) = \Sigma((\bold{x}_1 - \bold{y}, \bold{x}_2-\bold{y}, \ldots, \bold{x}_n - \bold{y})) \ \ \ \ \ , \ \forall \bold{y}\in \mathbb R^d \ . \]

\indent Let $\sim_{twt}$ be the equivalence relation on $\TS(\mathbb R^d)$ defined by time-warping and time-translation. Let $\{\bold{0}\}$ be the time-series of length $1$ given by the zero vector in $\mathbb R^d$. Then 
\[ \underline{\bold{y}} \sim_{twt} \{\bold{0}\} \ \iff \ \underline{\bold{y}} = \{ \bold{y}_1, \ldots, \bold{y}_1\} \ . \]

\noindent Clearly, all the above time-series have trivial discrete signature 

\begin{equation}\label{eq:monoid homomorphism} 
    \Sigma(\{\bold{0}\})=\epsilon \ . 
    \end{equation}

\begin{remark}[Matrix description of $\sim_{twt}$]\label{rmk:matrix time-warping}
To a time-series $\underline{\bold{x}}=\{\bold{x}_1,\ldots, \bold{x}_n\}\in \TS(\mathbb R^d)$ we associate the matrix $\bold{X}=(x_{ij})=\left[ \bold{x}_1 | \cdots | \bold{x}_n \right]\in \mathbb R^{d\times n}$. The time-translation $\underline{\bold{x}} \mapsto \{ \bold{0}, \bold{x}_2 - \bold{x}_1, \ldots, \bold{x}_n - \bold{x}_1\}$ corresponds to the right-multiplication below:
\begin{equation}\label{eq:matrix time-translation}
\bold{X}\cdot {\scriptsize{\begin{bmatrix}
0 & -1 & \cdots & -1 \\
  & 1 \\
  &    & \ddots \\
  &    &        & 1
\end{bmatrix}_{n\times n}}} = \bigg[ \bold{0} \ \big| \ \bold{x}_2 - \bold{x}_1 \ \big| \ \ldots \ \big|  \ \bold{x}_n - \bold{x}_1 \bigg] \ . 
\end{equation}
On the other hand, time-warping extensions and contractions are described as follows:
\begin{equation}\label{eq:matrix time-warping extension}
\bigg[ \bold{x}_1 \big| \cdots \big| \bold{x}_n \bigg] \cdot {\scriptsize{\begin{bmatrix}
1 & \\
  & 1 & 1 \\
  &   &   & 1 \\
  &   &   &   & \ddots \\
  &   &   &   &        & 1
\end{bmatrix}_{n \times (n+1)}}} = \bigg[ \bold{x}_1 \big| \bold{x}_2 \big| \bold{x}_2 \big| \bold{x}_3 \big| \cdots \big| \bold{x}_n \bigg] 
\end{equation}
\begin{equation}\label{eq:matrix time-warping contraction}
\bigg[ \bold{x}_1 \big| \bold{x}_2 \big| \bold{x}_2 \big| \bold{x}_3 \big| \cdots \big| \bold{x}_n \bigg] \cdot {\scriptsize{\begin{bmatrix}
1 & \\
  & 1 & -1 \\
  &   &  1 \\
  &   &  1 \\
  &   &     &  \ddots  \\
  &   &     &          & 1
\end{bmatrix}_{(n+1)\times n}}} = \bigg[ \bold{x}_1 \big| \bold{x}_2 \big| \bold{x}_3 \big| \cdots \big| \bold{x}_n \bigg] \ . 
\end{equation}
Finally, the time-series of consecutive differences $\Delta\underline{\bold{x}}$ is obtained by: 
\begin{equation}\label{eq:matrix delta}
\bold{X}\cdot {\scriptsize{\begin{bmatrix}
-1 \\
1 & -1 \\
  & 1 & \ddots \\
  &   & \ddots & -1 \\
  &   &        & 1
\end{bmatrix}_{n\times (n-1)}}} = \bigg[  \bold{x}_2 - \bold{x}_1 \ \big| \ \ldots \ \big|  \ \bold{x}_n - \bold{x}_{n-1} \bigg] \ . 
\end{equation}
\end{remark}

\indent Similarly to concatenation of paths, one can consider a {\em shifted} concatenation of time-series: for any two time-series $\underline{\bold{x}}=\{\bold{x}_1,\ldots, \bold{x}_n\}$ and $\underline{\bold{y}}=\{\bold{y}_1,\ldots , \bold{y}_m\}$ in $\mathbb R^d$, their (shifted) {\em concatenation} is the time-series of length $n+m$

\begin{align}\label{def:concatenation time-series}
\{\underline{\bold{x}}|\underline{\bold{y}}\} & := \begin{cases} 
\bold{x}_k & k \in [n] \\
\bold{y}_{k-n}-\bold{y}_1 + \bold{x}_n & k \in [n+m]\setminus [n]
\end{cases} \\
& = \{ \bold{x}_1, \ldots, \bold{x}_n , \bold{x}_n , \bold{y}_2-\bold{y}_1+\bold{x}_n , \ldots, \bold{y}_m - \bold{y}_1 + \bold{x}_n \} \nonumber
\end{align}

\noindent The shift in the formula \eqref{def:concatenation time-series} is motivated by the need of emulating the Chen relation \eqref{eq:chen relation}. Indeed, for discrete signatures of time-series the following {\em discrete Chen relation} holds (cf. \cite[Theorem 3.6]{bellingeri2025discrete}):
\begin{equation}\label{eq:discrete Chen relation}
\Sigma(\{\underline{\bold{x}}|\underline{\bold{y}}\}) = \Sigma(\underline{\bold{x}})\otimes \Sigma(\underline{\bold{y}}) \ . 
\end{equation}

\noindent Note that the shifted concatenation of time-series is coherent with the time-warping-translation equivalence. Moreover, the class $\{\bold{0}\}$ (modulo $\sim_{twt}$) is the identity element in $\TS(\mathbb R^d)$ for the shifted concatenation:
\[ \{\{\bold{0}\}|\underline{\bold{x}}\} = \{ \underline{\bold{x}}|\{\bold{0}\} \} = \underline{\bold{x}} \ . \]
Finally, $\TS(\mathbb R^d)$ is a {\em monoid} with respect to the shift concatenation with identity element $\{\bold{0}\}$, and the discrete signature map \eqref{eq:discrete signature map} is a {\em monoid homomorphism} (in light of \eqref{eq:discrete Chen relation} and \eqref{eq:monoid homomorphism}). Now, determining when two time-series have the same discrete signature is equivalent to determine the kernel of the monoid homomorphism $\Sigma$
\[ \ker(\Sigma):=\left\{ (\underline{\bold{x}},\underline{\bold{y}}) \in \TS(\mathbb R^d)^2 \ | \ \Sigma(\underline{\bold{x}})=\Sigma(\underline{\bold{y}}) \right\} \ . \]
\noindent Note that this is different from determining which time-series have trivial discrete signature $\epsilon$. We claim that $\ker(\Sigma)$ corresponds to the diagonal in $(\TS(\mathbb R^d)/_{\sim_{twt}})^2$, and hence the (quotient) discrete signature map $\TS(\mathbb R^d)/_{\sim_{twt}} \rightarrow \T((\mathscr{V}_d))$ is injective. \\
\hfill\break
\indent In the case of the classical Chen signatures, the bijection between tree-like equivalence for paths and the Chen signatures is obtained by exploiting the group structure of paths and the Chen relation \eqref{eq:chen relation} making the signature path map \eqref{eq:signature path} a group homomorphism. For discrete signature we just have a monoid structure, so determining the (monoidal) kernel $\ker(\Sigma)$ requires a different approach. First, we observe that TWR time-series with same discrete signatures must have the same length.\\
\indent Given a time-series $\underline{\bold{x}}=\{\bold{x}_1,\ldots, \bold{x}_n\}\in \TS(\mathbb{R}^d)$ we call {\em difference time-series} the time-series of consecutive differences 
\[ \Delta\underline{\bold{x}}:=(\Delta \bold{x}_1, \ldots , \Delta \bold{x}_{n-1}) \ \ \  \text{where}  \ \Delta \bold{x}_j:= \bold{x}_{j+1}-\bold{x}_j \ . \]
If $\underline{\bold{x}}$ is TWR, then $\Delta\underline{\bold{x}}$ has no zero vectors. Observe that two TWR time-series $\underline{\bold{x}},\underline{\bold{y}} \in \TS(\mathbb R^d)$ with same discrete signature $\Sigma(\underline{\bold{x}})=\Sigma(\underline{\bold{y}})$ must have same length: indeed, it is enough to consider the coefficient $C^{\underline{\bold{x}}}_{Z_{i_1}\otimes Z_{i_2} \otimes \cdots \otimes Z_{i_n}} = \Delta x_{i_1 1} \cdot \Delta x_{i_2 2} \cdots \Delta x_{i_n n}$ for $(i_1,\ldots, i_n)\subset [d]^n$ such that $\Delta x_{i_1 1}, \Delta x_{i_2 2}, ..., \Delta x_{i_n n}$ are nonzero entries. \\

Diehl, Ebrahimi-Fard and Tapia \cite[Theorem 3.15]{diehl2022tropical} addressed the injectivity of the discrete signature up to $\sim_{twt}$ in a more general context, working over any semiring $\mathbb{S}$ with cancellation property. In their proof, they use an elegant inductive argument (based on the cancellation property of the semiring) to show the injectivity for $1$-dimensional time-series.

\begin{proposition}[\cite{diehl2022tropical}, Theorem 3.15]\label{lemma:1-dim time-warping injectivity}
For any two $1$-dimensional time-series ${\bold{x}},{\bold{y}} \in \TS(\mathbb R)$,
\[ \Sigma({\bold{x}}) = \Sigma({\bold{y}}) \ \iff \ {\bold{x}} \sim_{twt} {\bold{y}} \ . \]
In particular, the quotient map $\TS(\mathbb R)/_{\sim_{twt}} \hookrightarrow \T((\mathscr{V}_1))$ is injective.
\end{proposition}

However, deducing the injectivity for higher dimensional time-series from the one for $1$-dimensional time-series requires an extra necessary step which is not taken into account in the proof of \cite[Theorem 3.15]{diehl2022tropical}. Indeed, starting from two TWR time-series $\underline{\bold{x}}, \underline{\bold{y}} \in \TS(\mathbb R^d)$, for any $i\in [d]$ the $1$-dimensional time-series $\underline{\bold{x}}^i:=\{x_{i1}, \ldots, x_{in}\}$ and $\underline{\bold{y}}^i:=\{y_{i1}, \ldots , y_{in}\}$ in $\TS(\mathbb R)$ of the $i$--th vector entries might be non-TWR. Then the same argument to prove Proposition \ref{lemma:1-dim time-warping injectivity} shows that for any $i\in [d]$ the $1$-dimensional difference time-series $\Delta\underline{\bold{x}}^i$ and $\Delta\underline{\bold{y}}^i$ have the same (ordered) nonzero values, but a priori they can still have zero values appearing at different entries. We fill this gap and prove injectivity for higher dimensional time-series. The following proof works by replacing $\mathbb R$ with any semiring $\mathbb S$ with the cancellation property.

\begin{theorem}\label{thm:uniqueness up to time-warping}
For any two time-series $\underline{\bold{x}},\underline{\bold{y}} \in \TS(\mathbb R^d)$,
\[ \Sigma(\underline{\bold{x}}) = \Sigma(\underline{\bold{y}}) \ \iff \ \underline{\bold{x}} \sim_{twt} \underline{\bold{y}} \ . \]
In particular, the quotient map $\TS(\mathbb R^d)/_{\sim_{twt}} \hookrightarrow \T((\mathscr{V}_d))$ is injective.
\end{theorem}

\begin{proof}
We assume $\underline{\bold{x}}, \underline{\bold{y}} \in \TS(\mathbb R^d)$ to be TWR and such that $\Sigma(\underline{\bold{x}})=\Sigma(\underline{\bold{y}})$. We know that $\underline{\bold{x}}$ and $\underline{\bold{y}}$ must have same length. Write $\underline{\bold{x}}=\{\bold{x}_1,\ldots, \bold{x}_n\}$ and $\underline{\bold{y}}=\{\bold{y}_1,\ldots , \bold{y}_n\}$.\\
\indent For every $i \in [d]$ consider the $1$-dimensional time-series $\underline{\bold{x}}^i:=\{x_{i1}, \ldots, x_{in}\}$ and $\underline{\bold{y}}^i:=\{y_{i1}, \ldots , y_{in}\}$ of the $i$--th vector entries. They can be non-TWR. However, the same argument in the proof of \cite[Theorem 3.15]{diehl2022tropical} shows that for any $i\in [d]$ the $1$-dimensional difference time-series $\Delta\underline{\bold{x}}^i$ and $\Delta\underline{\bold{y}}^i$ are equivalent, that is they have the same (ordered) nonzero values, and they can only differ in the order of appearance of the zeroes. In the following we prove that they also have zero values at the same entries, implying that $\Delta\underline{\bold{x}}^i=\Delta \underline{\bold{y}}^i$.

\indent Since the $d$-dimensional time-series $\underline{\bold{x}}$ is TWR, the difference time-series $\Delta\underline{\bold{x}}$ doesn't have zero vectors, hence there exists $(i_1,\ldots, i_n)\subset [d]^n$ such that $C^{\underline{\bold{x}}}_{Z_{i_1}\otimes \ldots \otimes Z_{i_n}}= \Delta x_{i_1 1}\cdots \Delta x_{i_n n}\neq 0$. In particular, each $\Delta x_{i_j j}\neq 0$. Since the time-series have the same discrete signatures, it follows $C_{Z_{i_1}\otimes \ldots \otimes Z_{i_n}}^{\underline{\bold{y}}} = \Delta y_{i_1 1}\cdots \Delta y_{i_n n}$ is nonzero too, as well as each $\Delta y_{i_j j}$. Now, pick $(\bar{i}, \bar{j}) \in [d]\times n$ such that $\Delta x_{\bar{i} \bar{j}}=0$ and substitute in the above tensor-monomials $Z_{i_{\bar{j}} \bar{j}}$ with $Z_{\bar{i} \bar{j}}$: one gets
\begin{align*}
    0 & = \Delta x_{i_1 1} \cdots \Delta x_{i_{\bar{j}-1} \, \bar{j}-1} \cdot \Delta x_{\bar{i} \bar{j}} \cdot \Delta x_{i_{\bar{j}+1} \, \bar{j}} \cdots  \Delta x_{i_n n}\\
    & = \underbrace{\Delta y_{i_1 1} \cdots \Delta y_{i_{\bar{j}-1} \, \bar{j}-1}}_{\neq 0} \cdot \Delta y_{\bar{i} \bar{j}} \cdot \underbrace{\Delta y_{i_{\bar{j}+1} \, \bar{j}} \cdots  \Delta y_{i_n n}}_{\neq 0}
    \end{align*}
implying $\Delta y_{\bar{i} \bar{j}}=0$. This shows that the $1$-dimensional difference time-series $\Delta \underline{\bold{x}}^i$ and $\Delta \underline{\bold{y}}^i$ have zeroes at the same entries, hence they coincide. We conclude that $\Delta \underline{\bold{x}}=\Delta \underline{\bold{y}}$, that is $\underline{\bold{x}}\sim_{twt} \underline{\bold{y}}$.
\end{proof}

\begin{remark}[Refinement of segment decomposition]\label{rmk:refinement} 
For data applications, unlike the classical signature of paths, the discrete signature is not invariant under \textquotedblleft refinement of segment decomposition\textquotedblright, which would correspond to an expansion of the time-series: an evidence is given by the discrete Chen relation \eqref{eq:discrete Chen relation}. \textcolor{black}{For instance, a segment $[b,d]$ and its refinement $[b,\frac{b+d}{2}]\cup[\frac{b+d}{2},d]$ give respectively the time-series $\{b,d\}$ and $\big\{ \{b,\frac{b+d}{2}\} | \{\frac{b+d}{2}, d\} \big\} = \{b, \frac{b+d}{2}, \frac{b+d}{2} , d\}$, whose discrete signatures are such that
\begin{align*} 
    &C^{\{b,d\}}_{Z^2}  = (d-b)^2,  \\
    &C^{\{ \{b,\frac{b+d}{2}\} | \{\frac{b+d}{2}, d\} \}}_{Z^2}  = C^{\{b,\frac{b+d}{2}\}}_\epsilon \cdot C^{\{\frac{b+d}{2}, d\}}_{Z^2} \ + \ C^{\{b,\frac{b+d}{2}\}}_{Z^2} \cdot C^{\{\frac{b+d}{2}, d\}}_\epsilon  = \ \textstyle{\frac{(d-b)^2}{2}}.
    \end{align*}  
    }
\noindent However, the Chen and the discrete signatures are interrelated \cite[Proposition 5.5]{diehl2020timewarping}: given a bounded variation path $\gamma:[0,1]\rightarrow \mathbb R^d$, the limit ---over finite partitions $\pi=\{ 0 < t_1 < \ldots < t_n <1\}$ --- of the discrete signatures of the time-series $\underline{\bold{x}}^\pi:=\{ \gamma(0), \gamma (t_1),\ldots, \gamma(t_n),\gamma(1)\}$ converges to the Chen signature of $\gamma$.
\end{remark}

\subsection{The discrete landscape feature map}\label{subsec:dis_sig_land}

The discrete signature is a well-defined and meaningful descriptor for persistence landscapes (and in general, of PWL paths). Indeed, persistence landscapes come with a \textquotedblleft canonical\textquotedblright \space time-series associated to them, namely the {\em minimal segment decomposition} (cf.\ Definition \ref{def:minimal segment decomp}). \\
\indent Each level $\lambda_k(t)$ of a persistence landscape $\lambda$ is a piecewise linear function with slope $1,-1,$ or $0$. The \textit{critical points} of $\lambda_k$ are the values of $t$ at which the slope changes, and the \textit{critical points} of the landscape $\lambda$ is the union of all the critical points for each $\lambda_k$. A persistence landscape is completely determined by its critical points. In fact, the standard implementation of landscape methods keeps track of the critical points rather than the landscape as a function \cite{bubenik2017persistence}. One can list the critical points of a landscape in terms of the corresponding barcode as follows.

\begin{lemma}[\cite{Bubenik_2020_landscapes}, Lemma 5.8]
    Given a barcode $B=\{ (b_i,d_i) \}$, the critical points in the corresponding landscape $\lambda^B$ consist of
    \begin{enumerate}
        \item the left endpoints $b_i$,
        \item the right endpoints $d_i$,
        \item the midpoints $\frac{b_i+d_i}{2}$,
        \item the midpoints $\frac{b_i + b_j}{2}$ of pairs of bars where $b_j < b_i <d_j<d_i$.
    \end{enumerate}
\end{lemma}

\begin{definition}\label{def:critical points}
Given a landscape $\lambda = (\lambda_1,...\lambda_d)$, we denote by $\underline{\boldsymbol{\lambda}} \in \TS(\mathbb R^d)$ the time-series obtained by evaluating $\lambda$ at all of its critical points, i.e. $\underline{\boldsymbol{\lambda}}$ is the sequence of critical values of $\lambda$. An example is given in Example \ref{example: signatures weight 2}.
\end{definition}

\begin{remark}\label{rmk:fix number of levels}
In practice, we work with landscapes with a fixed number of levels, say $d$, where we truncate levels, or extend levels that are identically zero as necessary to reach $d$ levels. 
\end{remark}

The assumption in the above remark allows us to associate a time-series to a landscape (hence to a barcode) in a canonical way. We call this \textit{discrete landscape embedding} and denote it by
\begin{equation}\label{eq:discrete embedding}
I: \BBar \rightarrow \TS(\mathbb R^d)
\end{equation}
Analogous to the feature maps in Subsection \ref{subsec:preliminaries landscapes}, we define the \textit{discrete landscape feature map} as the composition of the discrete signature and the discrete landscape embedding:
\begin{equation}\label{eq:discrete feature map}
\Phi_I: \BBar \xrightarrow{I} \TS(\mathbb R^d) \xrightarrow{\Sigma} \T((\mathscr{V}_d)).
\end{equation}

The following naive example illustrates how the discrete feature map distinguish between landscapes that are tree-like equivalent.

\begin{example}
    Consider the singleton barcode $B = \{ (b,d) \}$. Then $I(B)=\left(0,\frac{ d-b}{{2}},0\right)$ is a one-dimensional time-series of length $3$ whose discrete signature up to weight $3$ is
    \begin{align*}
        & & \langle \Phi_I(B), Z \rangle &= 0 \\
        \langle \Phi_I(B), Z^2 \rangle &= \frac{(d-b)^2}{2} 
        & \langle \Phi_I(B), Z \otimes Z \rangle  &= - \frac{(d-b)^2}{2^2} & \langle\Phi_I(B), Z^3\rangle &= 0 \\
        \langle\Phi_I(B), Z^2 \otimes Z\rangle &= - \frac{(d-b)^3}{2^{3}}
        & \langle\Phi_I(B), Z\otimes Z^2\rangle &= \frac{(d-b)^3}{2^{3}}& \langle\Phi_I(B), Z\otimes Z \otimes Z\rangle &= 0
    \end{align*}
    Now consider a barcode $C = \{ (b,d),(b',d') \}$ where $b' > d$ (similar to Figure \ref{fig:two_spheres}). Then, up to $twt$--equivalence, 
    \[I(C) = \left( 0, \frac{d-b}{{2}},0, \frac{d'-b'}{{2}} ,0 \right) \] 
    is a one-dimensional time-series of length $5$ whose signature up to weight 3 is
        \begin{align*}
        \langle\Phi_I(C), Z\rangle &= 0
        &
        \langle\Phi_I(C), Z \otimes Z\rangle &= -\frac{1}{2^2}\left( (d-b)^2 + (d'-b')^2 \right) \\
        \langle\Phi_I(C), Z^2\rangle &= \frac{1}{2}\left((d-b)^2 + (d'-b')^2\right) 
        &
        \langle\Phi_I(C), Z^2 \otimes Z\rangle &= -\frac{1}{2^3}\left( (d-b)^3 + (d'-b')^3 \right) \\
        \langle\Phi_I(C), Z^3\rangle &= 0
        &
        \langle\Phi_I(C), Z\otimes Z^2\rangle &=  \frac{1}{2^3}\left( (d-b)^3 + (d'-b')^3 \right) \\
        & & \langle\Phi_I(C), Z\otimes Z \otimes Z\rangle &= 0 
    \end{align*}
\end{example}

In slightly more generality, whenever a barcode $B$ has a landscape $\lambda^B$ with a single level, the discrete signature captures persistence, intended as the difference values between death and birth times. 

\begin{lemma}\label{lemma:1_level_sigs}
    The signature of a barcode $B$ defining a single-level landscape $\lambda^B$ is completely determined by the persistence of the bars in $B$.
\end{lemma}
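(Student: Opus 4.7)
My plan is to compute the critical values of the landscape explicitly and show that they depend only on the persistences of the bars; the claim then follows since the discrete signature is a function of these critical values alone.

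First I would verify that if $\lambda^B$ has only its first level nontrivial (i.e.\ $\lambda_k \equiv 0$ for every $k \geq 2$), then the bars of $B$ must be pairwise non-overlapping, at most sharing endpoints. Indeed, if two bars shared an open overlap $I$, then for any $t$ in the interior of $I$ one would have $\beta^{t-s,t+s} \geq 2$ for all sufficiently small $s > 0$, forcing $\lambda_2(t) > 0$ and contradicting the single-level hypothesis. Ordering the bars $(b_1, d_1), \ldots, (b_n, d_n)$ by birth time then yields $d_i \leq b_{i+1}$ for each $i$.

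Next I would describe $\lambda_1$ explicitly and list its critical points. On each interval $[b_i, d_i]$, $\lambda_1$ is the isosceles triangle with apex $\bigl((b_i+d_i)/2,\ (d_i-b_i)/2\bigr)$, and $\lambda_1 \equiv 0$ outside $\bigcup_i [b_i, d_i]$. Hence the critical points of $\lambda_1$ are the $b_i$'s, the midpoints $m_i := (b_i + d_i)/2$, and the $d_i$'s, with critical values $0,\ h_i,\ 0$ respectively, where $h_i := (d_i - b_i)/2$ is half the persistence of the $i$-th bar. Since higher levels vanish identically, the time-series $I(B) \in \TS(\mathbb{R}^d)$ of critical values of $\lambda^B$, after the time-warping reduction collapsing the consecutive zeros between adjacent bars, has the canonical form
\[
I(B) \;=\; \bigl(\mathbf{0},\ h_1 e_1,\ \mathbf{0},\ h_2 e_1,\ \mathbf{0},\ \ldots,\ \mathbf{0},\ h_n e_1,\ \mathbf{0}\bigr),
\]
where $e_1$ denotes the first standard basis vector of $\mathbb{R}^d$.

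Finally, by formula \eqref{eq:discrete signature} each coefficient of $\Sigma(I(B))$ is a polynomial in the consecutive-difference vectors $(h_1 e_1,\, -h_1 e_1,\, h_2 e_1,\, -h_2 e_1,\, \ldots,\, h_n e_1,\, -h_n e_1)$, which depend only on the persistences $d_i - b_i$ of the bars in birth order. Therefore $\Phi_I(B) = \Sigma(I(B))$ is entirely determined by the persistences of the bars of $B$. The only step demanding genuine care is the first one, translating the analytic hypothesis \textquotedblleft single-level landscape\textquotedblright\ into the combinatorial condition of pairwise non-overlapping bars; once that is in place, the structure of the critical-value time-series and the dependence of $\Sigma$ on it follow mechanically.
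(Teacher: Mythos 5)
Your proof is correct and follows essentially the same route as the paper's: order the non-overlapping bars, observe that the critical-value time-series (hence its difference series) is a list of half-persistences and their negatives, and conclude because the discrete signature is a polynomial function of the consecutive differences. You supply slightly more detail than the paper on why a single-level landscape forces the bars to be non-overlapping and on the treatment of the zero gaps between bars, but the substance is identical.
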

\begin{proof}
    If $\lambda^B$ has a single level then there exists a total ordering $\prec$ of $B$ so that $d \leq b'$ whenever $(d,b) \prec (d',b')$, since otherwise there would be a second level. Label the barcodes in such a way that $(b_1,d_1)<(b_2,d_2) < ... < (b_{|B|},d_{|B|}))$, with $d_i<b_j$ if $i<j$. Then the time-series of differences has the form 
    {\small \[
    \Delta I(B) = \left(\frac{(d_1-b_1)}{{2}}, \frac{-(d_1-b_1)}{{2}},\frac{(d_2-b_2)}{{2}},\frac{-(d_2-b_2)}{{2}},..., \frac{(d_{|B|}-b_{|B|})}{{2}},\frac{-(d_{|B|}-b_{|B|})}{{2}} \right).
    \] }
    Since the signature is obtained by evaluation of polynomials on the time-series of differences, the lemma follows.
\end{proof}

\begin{corollary}
    The coefficients corresponding to univariate polynomials $p \in \mathbb{R}[Z_i]$ are determined by the persistence.
\end{corollary}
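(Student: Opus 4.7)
The plan is to reduce the corollary to Lemma \ref{lemma:1_level_sigs} by projecting to the $i$-th coordinate. First, I would unpack the definition: for any tensor $p_1(Z_i) \otimes \cdots \otimes p_\ell(Z_i)$ whose factors all lie in $\mathbb R[Z_i]$, formula \eqref{eq:discrete signature} collapses to
\[ \langle \Phi_I(B), p_1(Z_i) \otimes \cdots \otimes p_\ell(Z_i) \rangle = \sum_{1 \leq j_1 < \cdots < j_\ell < n} p_1(\Delta x^i_{j_1}) \cdots p_\ell(\Delta x^i_{j_\ell}), \]
where $\Delta x^i_j = \lambda_i(t_{j+1}) - \lambda_i(t_j)$ and $(t_1, \ldots, t_n)$ are the critical points of $\lambda^B$. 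This is exactly the evaluation of the one-dimensional discrete signature of the time-series $\underline{\bold{x}}^i = (\lambda_i(t_1), \ldots, \lambda_i(t_n)) \in \TS(\mathbb R)$ at $p_1(Z) \otimes \cdots \otimes p_\ell(Z)$, so the coefficient depends on $B$ only through $\underline{\bold{x}}^i$.

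Next, I would view the level $\lambda_i$ as itself a single-level landscape. Since $\lambda_i$ is a non-negative piecewise-linear loop with slopes in $\{-1,0,1\}$, it coincides with the $1$-level landscape of an effective barcode $B^{(i)}$ whose bars are the local peaks of $\lambda_i$, each with persistence equal to twice the corresponding peak height. Applying Lemma \ref{lemma:1_level_sigs} to $B^{(i)}$ then expresses the one-dimensional signature of the critical-value time-series of $\lambda_i$ (sampled at $\lambda_i$'s own critical points) as an explicit function of the persistence values of $B^{(i)}$, which are themselves read off from the barcode $B$.

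The main obstacle, and the step I expect to require the most care, is that $\underline{\bold{x}}^i$ samples $\lambda_i$ at the critical points of the \emph{full} landscape $\lambda^B$, not only at the critical points of $\lambda_i$. At an extra sample lying in a constant segment of $\lambda_i$, the resulting consecutive repetition is absorbed by the time-warping invariance established in Theorem \ref{thm:uniqueness up to time-warping}. The genuinely delicate case is when an extra sample lies in a non-constant segment of $\lambda_i$: the refined time-series and the intrinsic critical-value time-series of $\lambda_i$ can have different discrete signatures, so Lemma \ref{lemma:1_level_sigs} does not apply verbatim. The argument must therefore carefully account for how critical points of the other levels $\lambda_j$ ($j \neq i$) subdivide the linear pieces of $\lambda_i$, and show that the resulting contributions can still be rewritten purely in terms of the persistence data of $B$---for instance by grouping the refined sums along each linear piece and recognising the grouped contributions as polynomial functions of the peak heights (i.e.\ the persistences) entering at level $i$.
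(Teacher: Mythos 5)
Your reduction to the one-dimensional time-series $\underline{\bold{x}}^i$ is correct, but the rest of the argument aims at a statement that is stronger than the intended corollary and is in fact false. The corollary lives inside the hypothesis of Lemma \ref{lemma:1_level_sigs}, i.e.\ it concerns barcodes whose landscape has a single level; there it is an immediate restriction of the lemma (the whole signature is determined by persistence, so in particular the coefficients on tensors of polynomials from $\mathbb{R}[Z_i]$ are), and no further work is needed. You instead try to prove that for an \emph{arbitrary} barcode the coefficients on $\mathbb{R}[Z_i]$ are determined by persistence, by realising $\lambda_i$ as the one-level landscape of an ``effective barcode'' of its local peaks. This fails for two reasons. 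First, a level $\lambda_i$ of a multi-level landscape generically has positive local minima (where tent functions cross), so its increment sequence is not the alternating $\pm\frac{d-b}{2}$ pattern that the proof of Lemma \ref{lemma:1_level_sigs} relies on, and the peak heights of $\lambda_i$ are not read off from the persistences of the bars of $B$. Second, the ``genuinely delicate case'' you flag is not a technical gap to be patched but a genuine obstruction: the subdivision points contributed by the other levels depend on the relative positions of the bars, not on their persistences, so no regrouping of the refined sums can express them in terms of persistence alone.

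Concretely, the example given shortly after the corollary refutes the general statement you are trying to prove: for the barcode with bars $[0,2]$ and $[1,7]$ one computes $\langle \Phi_I(B), Z_1^2\rangle = \frac{29}{2}$, whereas for the barcode with bars $[0,2]$ and $[10,16]$ --- the same multiset of persistences $\{2,6\}$, but disjoint --- Lemma \ref{lemma:1_level_sigs} gives $\langle \Phi_I(B), Z_1^2\rangle = \frac{2^2}{2}+\frac{6^2}{2} = 20$. This is precisely the content of the remark following the corollary (``the dependence of the signature on persistence alone is specific to one-level landscapes \dots picked up already at weight $2$''). So the corollary should be read under the single-level hypothesis, where your first projection step together with the lemma already completes the proof; the remainder of your argument cannot be salvaged in the generality you attempt.
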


\indent One can derive some explicit signature coefficients in terms of persistence for single-level landscapes. For example, straightforward computations with the difference time-series in the proof above give
    
    \begin{align*} 
    \langle\Phi_I(B), Z^{\otimes n} \rangle &= 0 \hspace{26mm} \text{if } n \equiv 1 \ (mod \ 2)  \\
    \langle\Phi_I(B), Z^n \rangle &= \begin{cases}
        0 & \text{if } n \equiv 1 \ (mod \ 2) \\
        \sum\limits_{\beta \in B} \frac{(d_\beta-b_\beta)^{n}}{2^{n-1}} & \text{if } n \equiv 0 \ (mod \ 2)
        \end{cases}
        \end{align*}
        
\begin{remark}
The latter equation can be interpreted as a list of even moments of the time-series, so that standard statistical descriptors of time-series are naturally captured by the discrete signature. In particular, going back to the example in Figure \ref{fig:two_spheres}, the above observation shows that the discrete signature encodes information about the persistence of the spheres, which is determined by their radii, whereas the Chen signature is trivial.
\end{remark}

\indent The dependence of the signature on persistence alone is specific to one-level landscapes. For landscapes with more levels the signature encodes additional information related to relative distances between bars, and this is picked up already at weight 2.

\begin{example}\label{example: signatures weight 2}
    Consider the time-series
    \[ \underline{\bold{x}} = \left(
    \begin{pmatrix}
0 \\ 0
\end{pmatrix},
\begin{pmatrix}
1 \\ 0
\end{pmatrix},
\begin{pmatrix}
1/2 \\ 1/2
\end{pmatrix},
\begin{pmatrix}
1 \\ 0
\end{pmatrix},
\begin{pmatrix}
3 \\ 0
\end{pmatrix},
\begin{pmatrix}
0 \\ 0
\end{pmatrix}
    \right) \]
    corresponding to the landscape
    
\begin{center}
\begin{tikzpicture}[scale=0.6]

    \draw[thick, blue] (0,0) -- (1,1) -- (1.5,0.5)--(4,3) -- (7,0);

    \draw[thick, red] (0,0)--(1,0)-- (1.5,0.5) -- (2,0) -- (7,0);

    \fill[blue] (1,1) circle[radius=2pt] node[above left] {$(1,1)$};
    \fill[blue] (4,3) circle[radius=2pt] node[above right] {$(4,3)$};
\end{tikzpicture}
\end{center}
    
    \noindent The discrete signature of $\underline{\bold{x}}$ up to weight $2$ is
    \[ \langle\Sigma(\underline{\bold{x}}), Z_1\rangle = \langle\Sigma(\underline{\bold{x}}), Z_2\rangle  = 0 \]
    \begin{align*} 
    \langle\Sigma(\underline{\bold{x}}), Z_1^2\rangle &= \frac{29}{2}
    & \langle\Sigma(\underline{\bold{x}}), Z_1 Z_2\rangle &= - \frac{1}{2}
    & \langle\Sigma(\underline{\bold{x}}), Z_2^2\rangle &= \frac{1}{2} \\
    \langle\Sigma(\underline{\bold{x}}), Z_1\otimes Z_1\rangle &=-\frac{29}{4} 
    & \langle\Sigma(\underline{\bold{x}}), Z_1 \otimes Z_2\rangle &= \frac{1}{4}
    & \langle\Sigma(\underline{\bold{x}}), Z_2\otimes Z_2\rangle &= -\frac{1}{4}
    \end{align*}
\end{example}

\subsection{Continuity of the discrete landscape feature map}
The discrete signature, being a component-wise polynomial map, is continuous. However, the discrete landscape embedding is not, hence the discrete landscape feature map is not continuous either. The key obstruction is illustrated in the following example.

\begin{example}
    Let $B =\{ (b,d)\}$, with $d>b$, and $B' = \{(b,d),(\frac{d-b}{4}, \frac{d-b}{4} + \delta)\}$ for some $0< \delta << d-b$. Their bottleneck distance (cf.\ Section \ref{subsec:restricted}) is $\dist_{\B}(B,B')= \frac{\delta}{\sqrt{2}}$, and the corresponding persistence landscapes are
    \begin{figure}[H]
    \centering
    \begin{tikzpicture}[scale=0.3]
    \draw[thick, blue] (0,0) -- (4,4) -- (8,0);
    \draw[thick, blue] (11,0) -- (15,4) -- (19,0);
    \draw[thick, red] (11,0) -- (13,0) -- (13.3,0.3) -- (13.6,0) -- (19,0);
    \node at (4,-1) {$B$};
    \node at (15,-1) {$B'$};
    \end{tikzpicture}
    \end{figure}
    \noindent Some coefficients in the discrete signatures are such that no choice of $\delta$ will make their difference small. For instance, 
    \begin{align*}
    \langle \Phi_I(B) ,  Z_1 \otimes Z_1 \otimes Z_1 \rangle & = 0 \\
    \langle \Phi_I(B') , Z_1 \otimes Z_1 \otimes Z_1  \rangle & =  -\frac{(d-b)^3}{2^5} + o(\delta) \ . 
    \end{align*}
\end{example}

The heart of the issue with continuity is the poor behaviour of the discrete signature under refinement of segment decomposition (cf.\ Remark \ref{rmk:refinement}). Indeed, in the above example, the inclusion of a bar with arbitrarily small persistence, which produces a barcode that is close in bottleneck distance, forces a refinement of segment decomposition, whence the discontinuity.

\subsection{The specialised discrete landscape feature map}\label{subsec:restricted}

For applications, one may want to compare barcodes using signatures as input for machine learning methods \cite{lyons2025signaturemethodsmachinelearning}. In light of this, it makes sense to consider a finite subset $D \subset \BBar$. Then we can modify the discrete embedding map $I: \BBar \rightarrow \TS(\mathbb R^d)$ \eqref{eq:discrete embedding} as follows. Consider the union of all critical points of the landscapes arising from the domain $D$
\[ C_D := \{ c_1,c_2,...,c_N \} =\{ c \ | \ c \ \text{critical point of} \ \lambda^B \text{ for some } B \in D \} \ , \]
and define the {\em specialised discrete embedding map} 
\begin{equation}\label{eq:restricted discrete embedding}
\begin{matrix}
I_D: & D & \rightarrow & \BV([0,L],\mathbb R^d) & \rightarrow & \TS(\mathbb R^d) \\
& B & \mapsto & \lambda^B & \mapsto & (\lambda^B (c_1), \lambda^B(c_2),..., \lambda^B(c_N))
\end{matrix} \ .
\end{equation}
This restriction allows us to have a consistent segment subdivision of the landscapes across all barcodes in $D$, which then guarantees continuity of the {\em specialised discrete landscape feature map}
\begin{equation}\label{eq:restricted discrteet feature map}
\Phi_{I_D}^{\leq k}: D \xrightarrow{I_D} \TS(\mathbb R^d) \xrightarrow{\Sigma^{\leq k}} \T^{\leq k}((\mathscr{V}_d))
\end{equation}
where $\T^{\leq k}((\mathscr{V}_d))$ denotes the space of discrete signature tensors truncated at weight $k$. \\
\indent For practical purposes, a particularly desirable property of a feature map would be stability, or equivalently being $1$-Lipschitz. A weaker but still desirable condition is uniform continuity. We aim that the specialised discrete landscape feature map $\Phi^{\leq k}_{I_D}$ is uniformly continuous. Before proving this, we describes explicitly the metrics on all the involved spaces, that is $\BBar$, $\BV([0,L],\mathbb R^d)$, $\TS(\mathbb R^d)$ and $\T^{\leq k}((\mathscr{V}_d))$.

\paragraph{Bottleneck distance.} The {\em bottleneck distance} was classically introduced for persistence diagrams in \cite[Sec.\ 3]{cohen2007stability}, and it translates in the following distance between barcodes:

\[ \dist_{\B}(B,B') := \inf_{\tau} \sup_{B_i \in B} \norm{ B_i - \tau(B_i)}_\infty \]

\noindent where $\tau$ runs among the bijections $\tau: B \cup \{(b,b)|b \in \mathbb R\} \rightarrow B' \cup \{(b,b)| b \in \mathbb R\}$ allowing matching birth-death points to diagonal points. This makes the definition consistent when the two barcodes have different number of intervals. The distance between landscapes is the $\infty$--norm $\norm{\cdot}_{\infty}$, as considered in \cite[Sec.\ 5]{bubenik2015statistical}:

\[ \dist_{\infty}(\lambda^B , \lambda^{B'}) := \norm{\lambda^B - \lambda^{B'}}_\infty = \sup_{t} \left\{ \norm{\lambda^B(t)-\lambda^{B'}(t)}_2 \right\} \ . \]

\noindent Finally, given two time-series in $\mathbb R^d$ of the same length $n$, the distance between them that we consider is

\[ \dist_{\TS}(\underline{\bold{x}}, \underline{\bold{y}}) = \max \left\{ \norm{\bold{x}_i-\bold{y}_i}_2 \ | \ i \in [n] \right\}  \ . \]

\paragraph{Bombieri-Weyl product.} The (non-degenerate) scalar product $\langle \cdot , \cdot \rangle$ on $\mathbb R^d$ defining the $2$--norm $\norm{\cdot}_2$ naturally induces a scalar product on $\T^{\leq k}((\mathscr{V}_d))$, called {\em Bombieri-Weyl product}. Fix an orthonormal basis $(e_1,\ldots , e_d)$ of $\mathbb R^d$. Let $Sym^\bullet \mathbb R^d$ be the graded symmetric algebra over $\mathbb R^d$, which is isomorphic (as graded algebras) to the polynomial ring $\mathbb R[Z_1,\ldots, Z_d]$: one identifies the basis vector $e_i$ with the variable $Z_i$. In particular, the isomorphism restricts to each graded component, so that $Sym^h\mathbb R^d$ is isomorphic to the vector space $\mathbb R[Z_1,\ldots, Z_d]_h$ of homogeneous polynomials in $d$ variables of degree $h$. For any $h\geq 0$, the scalar product $\langle \cdot ,\cdot \rangle$ extends to a scalar product, called {\em Bombieri-Weyl product} (BW-product), on $\mathbb R[Z_1,\ldots, Z_d]_h$ defined on the monomials (and extended by linearity) by
\[ \langle Z_1^{\alpha_1}\cdots Z_d^{\alpha_d} , Z_1^{\beta_1}\cdots Z_d^{\beta_d}\rangle_{sym} = \binom{h}{\boldsymbol{\alpha}}^{-1}\delta_{\boldsymbol{\alpha}\boldsymbol{\beta}} \ , \]
where $\binom{h}{\boldsymbol{\alpha}}=\frac{h!}{\alpha_1! \cdots \alpha_d!}$ and the latter symbol is the Kronecker symbol. The BW-product on $Sym^h\mathbb R^d$ induces the Bombieri-Weyl norm, also known as {\em Kostlan norm}: for any degree--$h$ monomial $\bold{Z}^{\boldsymbol{\alpha}}:=Z_1^{\alpha_1}\cdots Z_d^{\alpha_d}$ it holds
\[ \norm{\bold{Z}^{\boldsymbol{\alpha}}}_{sym}^2 = \binom{h}{\boldsymbol{\alpha}}^{-1} \ .\]
One extends the BW-product to the whole algebra $Sym^\bullet\mathbb R^d\simeq \mathbb R[Z_1, \ldots , Z_d]$ by imposing orthogonality between monomials with different grades. This defines a scalar product on $\mathscr{V}_d$. Again, this extends to any tensor-power of $\mathscr{V}_d^{\otimes \ell}$, resulting in the following norm: given $\ell$ monomials $\bold{Z}^{\boldsymbol{\alpha}_1}, \ldots, \bold{Z}^{\boldsymbol{\alpha}_\ell}$ in $\mathscr{V}_d$ such that each $\boldsymbol{\alpha}_i=(\alpha_{i,1}, \ldots, \alpha_{i,d})\in \mathbb N^d$ is a partition of the degree $|\boldsymbol{\alpha}_i|=\alpha_{i,1}+\ldots + \alpha_{i,d}$, it holds
\[ \norm{\bold{Z}^{\boldsymbol{\alpha}_1}\otimes \ldots \otimes \bold{Z}^{\boldsymbol{\alpha}_\ell}}_{BW}^2 = \norm{\bold{Z}^{\boldsymbol{\alpha}_1}}_{sym}^2\cdots \norm{\bold{Z}^{\boldsymbol{\alpha}_\ell}}_{sym}^2 = \binom{|\boldsymbol{\alpha}_1|}{\boldsymbol{\alpha}_1}^{-1}\cdots \binom{|\boldsymbol{\alpha}_\ell|}{\boldsymbol{\alpha}_{\ell}}^{-1} \ . \]
Analogously, one extends the scalar product to $\T^{\leq k}((\mathscr{V}_d))$ by imposing orthogonality between tensor-products of different order. In particular, the BW-norm on $\T^{\leq k}((\mathscr{V}_d))$ is

{\footnotesize \[ \norm{\sum_{|\boldsymbol{\alpha}_1|+\ldots + |\boldsymbol{\alpha}_\ell|\leq k} C_{\boldsymbol{\alpha}_1, \ldots, \boldsymbol{\alpha}_\ell}\bold{Z}^{\boldsymbol{\alpha}_1}\otimes \ldots \otimes \bold{Z}^{\boldsymbol{\alpha}_\ell} }_{BW}^2 = \sum_{|\boldsymbol{\alpha}_1|+\ldots + |\boldsymbol{\alpha}_\ell|\leq k} C_{\boldsymbol{\alpha}_1, \ldots, \boldsymbol{\alpha}_\ell}^2 \binom{|\boldsymbol{\alpha}_1|}{\boldsymbol{\alpha}_1}^{-1}\cdots \binom{|\boldsymbol{\alpha}_\ell|}{\boldsymbol{\alpha}_{\ell}}^{-1} \ . \]}

This allows to define the following {\em Bombieri-Weyl distance} on $\T^{\leq k}((\mathscr{V}_d))$: given two elements $f,g\in \T^{\leq k}((\mathscr{V}_d))$ with coefficients $C^f_{\boldsymbol{\alpha}_1, \ldots, \boldsymbol{\alpha}_\ell}$ and $C^g_{\boldsymbol{\alpha}_1, \ldots, \boldsymbol{\alpha}_\ell}$ respectively, it holds
\begin{align}\label{eq:BW norm}
    \dist_{BW}(f,g) & = \norm{\sum_{|\boldsymbol{\alpha}_1|+\ldots + |\boldsymbol{\alpha}_\ell|\leq k} \left( C^f_{\boldsymbol{\alpha}_1, \ldots, \boldsymbol{\alpha}_\ell} - C^g_{\boldsymbol{\alpha}_1, \ldots, \boldsymbol{\alpha}_\ell} \right)\bold{Z}^{\boldsymbol{\alpha}_1}\otimes \ldots \otimes \bold{Z}^{\boldsymbol{\alpha}_\ell} }_{BW}^2 \nonumber \\
    & = \sum_{|\boldsymbol{\alpha}_1|+\ldots + |\boldsymbol{\alpha}_\ell|\leq k} \left( C^f_{\boldsymbol{\alpha}_1, \ldots, \boldsymbol{\alpha}_\ell} - C^g_{\boldsymbol{\alpha}_1, \ldots, \boldsymbol{\alpha}_\ell} \right)^2 \binom{|\boldsymbol{\alpha}_1|}{\boldsymbol{\alpha}_1}^{-1}\cdots \binom{|\boldsymbol{\alpha}_\ell|}{\boldsymbol{\alpha}_{\ell}}^{-1}
    \end{align}

\paragraph{Uniform continuity.} In our case we start from the standard scalar product $\langle \bold{u},\bold{w}\rangle = \sum u_iw_i$ on $\mathbb{R}^d$, defining the $2$--norm $\norm{\cdot}_2$.

\begin{theorem}\label{thm:uniform continuity}
    Given $D\subset \BBar$ a finite set of barcodes, the specialised discrete landscape feature map $\Phi_{I_D}^{\leq k}$ \eqref{eq:restricted discrteet feature map} is uniformly continuous with respect to the bottleneck distance on $\BBar$ and the Bombieri-Weyl distance on $\T^{\leq k}((\mathscr{V}_d))$.
\end{theorem}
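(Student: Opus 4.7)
The plan is to factor $\Phi_{I_D}^{\leq k} = \Sigma^{\leq k} \circ I_D$ and establish uniform continuity of each factor separately, then compose. For the first factor, I would show that $I_D$ is Lipschitz from $(\BBar, \dist_{\B})$ to $(\TS(\mathbb R^d), \dist_{\TS})$. Writing $I_D$ as the composition of the landscape embedding $B \mapsto \lambda^B$ with pointwise evaluation $\lambda \mapsto (\lambda(c_1), \ldots, \lambda(c_N))$ at the fixed points $C_D = \{c_1, \ldots, c_N\}$, Bubenik's landscape stability theorem \cite{bubenik2015statistical} yields a constant $C > 0$ with $\dist_\infty(\lambda^B, \lambda^{B'}) \leq C \cdot \dist_{\B}(B, B')$, and evaluation is trivially $1$-Lipschitz since
\[ \dist_{\TS}(I_D(B), I_D(B')) = \max_{i \in [N]} \|\lambda^B(c_i) - \lambda^{B'}(c_i)\|_2 \leq \sup_{t} \|\lambda^B(t) - \lambda^{B'}(t)\|_2 = \dist_\infty(\lambda^B,\lambda^{B'}). \]

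For the second factor, I would first note that $I_D(\BBar)$ sits inside the compact box $K := [0, L/2]^{dN} \subset \mathbb R^{dN}$ (identifying length-$N$ time-series with tuples), because every $B \in \BBar$ is tame with bars in $[0, L]$ and thus each coordinate of $\lambda^B$ is bounded by $L/2$. By the explicit formula \eqref{eq:discrete signature}, $\Sigma^{\leq k}$ is polynomial in the entries of the time-series, and the truncation at weight $\leq k$ makes $\T^{\leq k}((\mathscr{V}_d))$ finite-dimensional (only finitely many tensor-monomials $p_1 \otimes \cdots \otimes p_\ell$ have total weight $\leq k$), so the Bombieri--Weyl expression \eqref{eq:BW norm} is a genuine norm on a finite-dimensional space. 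Continuity of a polynomial map restricted to the compact $K$ is therefore automatically uniform.

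Composing a Lipschitz map with a uniformly continuous map yields uniform continuity of $\Phi_{I_D}^{\leq k}$. The \emph{key conceptual point}, rather than a technical obstacle, is to recognise that specialising to a fixed set $C_D$ of evaluation points is precisely what eliminates the discontinuity exhibited in the preceding subsection: the output time-series always has length $N$ independent of $B$, so variations of $B$ cannot force a refinement of segment decomposition, which was the mechanism causing $\Phi_I$ to fail continuity. Everything else is a soft argument combining polynomial continuity with compactness of the image, and in particular the result is robust to the choice of norm on $\T^{\leq k}((\mathscr{V}_d))$ since all norms on finite-dimensional spaces are equivalent.
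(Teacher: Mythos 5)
Your proposal is correct and follows essentially the same route as the paper: factor $\Phi_{I_D}^{\leq k}=\Sigma^{\leq k}\circ I_D$, get (uniform) continuity of $I_D$ from Bubenik's landscape stability composed with the $1$-Lipschitz evaluation at the fixed points of $C_D$, and get uniform continuity of the truncated discrete signature from the fact that its coefficients are polynomials restricted to a compact domain. Your explicit remarks that tameness confines the image to the box $[0,L/2]^{dN}$ and that $\T^{\leq k}((\mathscr{V}_d))$ is finite-dimensional only make precise the compactness and norm-equivalence facts the paper's proof uses implicitly, so no further comment is needed.
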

\begin{proof}
We divide the proof in more steps, showing that each map involved in $\Phi_{I_D}^{\leq k}$ is uniformly continous.\\
\indent {\em Claim 1: The discrete embedding $I_D$ is uniformly continuous.} The map $I_D$ can be factored as $I_d = \mathbf{ev} \circ \lambda$ where $\lambda: D \rightarrow \BV(\mathbb R^d)$ is the persistence landscape map restricted to $D$, and $\mathbf{ev}$ is evaluation at all the points in $C_D$. \\
In \cite[Theorem 2.4]{Bubenik_2020_landscapes}, it was shown that $\lambda$ is stable, meaning $1$-Lipschitz where $\BV(\mathbb R^d)$ has the $\infty$--norm: in particular, $\lambda$ is uniformly continuous. On the other hand, the evaluation map $\mathbf{ev}$ is evaluation of persistence landscapes (which are piecewise linear functions) at a fixed set of points, so it is also $1$-Lipschitz, hence uniformly continuous.\\
\indent {\em Claim 2: The discrete signature $\Sigma$ is uniformly continuous.} We need to prove that for any $\epsilon>0$ there exists $\gamma>0$ such that 
\[ \dist_{\TS}(\underline{\bold{x}},\underline{\bold{y}})<\gamma \ \implies \norm{\Sigma^{\leq k}(\underline{\bold{x}}) - \Sigma^{\leq k}(\underline{\bold{y}})}_{BW}< \epsilon \ . \]
From \eqref{eq:BW norm}, and since we are working with a truncation of the discrete signature, it is enough to show the statement for any coefficient $| C^{\underline{\bold{x}}}_{\boldsymbol{\alpha}_1, \ldots, \boldsymbol{\alpha}_\ell} - C^{\underline{\bold{y}}}_{\boldsymbol{\alpha}_1, \ldots, \boldsymbol{\alpha}_\ell} |$. \\
Given $\underline{\bold{x}}, \underline{\bold{y}}\in \TS(\mathbb R^d)$ of length $n$, consider their difference time-series $\Delta \underline{\bold{x}}, \Delta \underline{\bold{y}}$ of length $n-1$. Flatten them in the vectors $\boldsymbol{\Delta_x}:=(\Delta \bold{x}_1,\ldots, \Delta \bold{x}_{n-1})$ and $\boldsymbol{\Delta_y}:=(\Delta \bold{y}_1,\ldots, \Delta \bold{y}_{n-1})$ of length $d(n-1)$. In the polynomial ring $\mathbb R[T_{11} , \ldots ,T_{1d}, T_{21}, \ldots , T_{n-1 \, d}]$ consider the polynomial 

\[ P(\bold{T}) := \sum_{1 \leq i_1 \lneq \ldots \lneq i_{\ell} < n}  p_1(T_{i_1 \, 1},  \ldots , T_{i_1 \, d})\cdots p_\ell(T_{i_\ell \, 1}, \ldots, T_{i_\ell \, d}) \ . \]

\noindent Then it holds 

\[ P(\boldsymbol{\Delta_x}) = \sum_{1 \leq i_1 \lneq \ldots \lneq i_{\ell} < n} p_1(\Delta \bold{x}_{i_1})\cdots p_\ell(\Delta \bold{x}_{i_\ell}) = C^{\underline{\bold{x}}}_{p_1\otimes \ldots \otimes p_\ell}  \ . \]

\noindent But polynomials (on a compact domain) are uniformly continuous, hence for any $\epsilon >0$ there exists $\delta >0$ such that 
\[ \norm{\boldsymbol{\Delta_x} - \boldsymbol{\Delta_y}}_2 < \delta \ \implies \ \big| C^{\underline{\bold{x}}}_{p_1\otimes \ldots \otimes p_\ell} - C^{\underline{\bold{y}}}_{p_1\otimes \ldots \otimes p_{\ell}} \big|= \big| P(\boldsymbol{\Delta_x})-P(\boldsymbol{\Delta_y}) \big| < \epsilon \ . \]

\noindent Now, if 
\[ \dist_{\TS}(\underline{\bold{x}}, \underline{\bold{y}}) := \max \left\{ \norm{\bold{x}_i-\bold{y}_i}_2 \ | \ i \in [n] \right\} \leq \frac{\delta}{2(n-1)} \ , \]
\noindent one gets
\[ \norm{\Delta \bold{x}_{i+1} - \Delta \bold{y}_{i+1}}_2 =\norm{\bold{x}_{i+1}-\bold{x}_{i}-\bold{y}_{i+1}+\bold{y}_i}_2 \leq \norm{\bold{x}_{i+1}-\bold{y}_{i+1}}_2 + \norm{\bold{x}_i - \bold{y}_i}_2 \leq \frac{\delta}{n-1} \]
implying that on the flattenings it holds

\[ \norm{\boldsymbol{\Delta_x} - \boldsymbol{\Delta_y}}_2 \leq \sum_{j \in [n-1]} \norm{\Delta \bold{x}_{j} - \Delta \bold{y}_j}_2 \leq \delta \ . \]
We conclude that it is enough to take $\gamma := \frac{\delta}{2(n-1)}$ so that the hypothesis $\dist_{\TS}(\underline{\bold{x}},\underline{\bold{y}})<\gamma$ implies $|C^{\underline{\bold{x}}}_{\boldsymbol{\alpha}_1, \ldots, \boldsymbol{\alpha}_\ell} - C^{\underline{\bold{y}}}_{\boldsymbol{\alpha}_1, \ldots, \boldsymbol{\alpha}_\ell}| < \epsilon$. This proves that the truncated discrete signature map is uniformly continuous.
\end{proof}

We remark that discrete signatures characterise landscapes, hence barcodes, up to translating nonzero sections of a landscape along regions of the domain where the landscape is identically zero. Indeed, identically zero regions of the landscape produce consecutive zero vectors in the time-series of the critical points, and these can be contracted to a unique zero vector by time-warping. 
\section{Application to knotted proteins}\label{sec:application}

\noindent In this section we apply discrete signatures to a dataset of knotted proteins drawn from KnotProt 2.0 \cite{10.1093/nar/gky1140}, a curated database of proteins with entangled backbone structures. Knotted proteins have attracted considerable interest across structural biology and computational biology because the presence and localisation of entanglement are closely linked to protein folding pathways, structural stability, and functional behaviour \cite{faisca2015knotted,beccara}. In particular, even small topological changes in the backbone can lead to significant differences in stability and dynamics. Understanding and quantifying such entanglement therefore requires tools that capture both geometric and topological features of protein structure.

\indent We focus on proteins whose backbones form open-ended positive trefoil knots, the most common and well-studied class of knotted proteins. In this setting, the knot type is assigned using standard closure procedures for open curves, as implemented in KnotProt and related software. This provides a consistent topological framework in which to compare protein structures while retaining their geometric information.

\indent The data are obtained from the Protein Data Bank \cite{berman_protein_2000} and curated in KnotProt 2.0, which 
consists of the three-dimensional coordinates of the carbon (C-alpha) atoms of each protein backbone, and additional information about the backbone structure (i.e.~the ordering of C-alpha atoms along the chain), the full amino-acid sequence, and annotations such as knot type, knot core, and tail regions. 

\indent Following \cite{benjamin2023homology}, we interpolate five equidistant points between successive C-alpha atoms in order to obtain a sufficiently dense approximation of the piecewise linear backbone curve. We then compute one-dimensional persistent homology and persistence landscapes truncated at level $15$. The proteins are categorized into structural classes according to amino-acid {\em sequence similarity}; proteins with sufficiently similar sequences are expected to have similar three-dimensional structures. We adopt the same convention as \cite{benjamin2023homology} and label the class by a representative protein of the class, or \textquotedblleft other\textquotedblright \space whenever there were insufficient similar sequences to define additional classes. In the present analysis, we exclude proteins labelled \textquotedblleft other\textquotedblright \space from the sequence-similarity comparison, leaving $363$ proteins and $9$ structural classes.

\indent The knot depth is a standard geometric quantity used to quantify how far the knot core is located from the endpoints of the chain \cite{barbensi_f_2021}. More precisely, if $N$ and $C$ denote the N-tail and C-tail, respectively, and $T$ denotes the whole chain, then the depth of a protein chain $p$ is
\begin{equation}\label{eq:knot_depth}
D(p)=\frac{l(N)\,l(C)}{l(T)^2},
\end{equation}
where $l(\cdot)$ counts the number of C-alpha atoms in the relevant subchain. 

\indent In \cite{benjamin2023homology} the authors compute pairwise Wasserstein distances on persistence diagrams and pairwise $L_1$ distances on persistence landscapes, and apply Isomap to obtain low-dimensional embeddings of the protein dataset. These embeddings reveal a visible organisation of proteins according to sequence homology class and knot depth, providing qualitative evidence of a correlation between persistent homology features and biologically meaningful structure. 

\indent Building on this framework, we move from qualitative visual analysis to a quantitative comparison based on feature vectors associated to persistence landscapes. In addition to the discrete landscape feature map (DLFM), we implement and compare several alternative approaches, including continuous (Chen) signatures of landscapes and integrated landscapes, as well as baseline methods based on direct vectorisation and $L_1$ distances.

\indent For the computation of discrete signatures, we use the software FRUITS ({\em Feature Extraction Using Iterated Sums}) \cite{diehl2024fruits}. Continuous path signatures are computed using the \texttt{pysiglib} library \cite{shmelev2025pysiglib}, which provides efficient implementations of Chen signatures for piecewise linear paths. The corresponding code for all experiments in this section is publicly available at \href{https://github.com/dtolosav/discrete-sig-for-landscapes}{https://github.com/dtolosav/discrete-sig-for-landscapes}. We opted for a straightforward implementation of the DLFM, without performing feature selection or dimensionality reduction at the encoding stage.

\indent Note that both path signatures and discrete signatures are amenable to kernelization \cite{kiraly_kernels_2016, lee2025signature}, which has the potential to significantly improve computational aspects while preserving desirable properties.

\paragraph{Correlation between discrete signature and sequence similarity.} 
Using feature representations derived from persistence landscapes, we implement an unsupervised clustering approach and assess its agreement with sequence similarity classes. We use $k$-means clustering with $k=9$, corresponding to the number of structural classes, and evaluate performance using the {\em Adjusted Rand Index} (ARI) and {\em Normalised Mutual Information} (NMI). Proteins in the dataset whose structural class were labelled \textquotedblleft other\textquotedblright \space were not considered in this analysis. To account for the stochastic nature of $k$-means, we repeat the clustering procedure $100$ times with different initialisations and report the average and standard deviation of ARI and NMI across runs. All experiments are performed with fixed random seeds to ensure reproducibility.

\indent We compare several representations: the discrete landscape feature map (DLFM), continuous (Chen) signatures of landscapes and integrated landscapes, flattened landscape vectors (as points in a Euclidean space), and clustering based on $L_1$ distances between landscapes. All methods are evaluated under the same landscape truncation at $15$ levels and using the same clustering procedure, except for the $L_1$-distance method, where no truncation of landscape levels is applied. For the discrete and Chen signature methods, we use signature terms up to weight 3. Details on hyperparameter selection are provided in the accompanying GitHub repository.
\begin{table}[H]
\centering
\begin{tabular}{lcc}
\hline
\textbf{Method} & \textbf{ARI} & \textbf{NMI} \\
\hline
DLFM (discrete signatures) & $\mathbf{0.962 \pm 0.045}$ & $\mathbf{0.882 \pm 0.016}$ \\
Chen signature (landscapes) & $0.708 \pm 0.210$ & $0.778 \pm 0.059$ \\
Chen signature (integrated landscapes) & $0.779 \pm 0.010$ & $0.758 \pm 0.003$ \\
Flattened landscapes & $0.906 \pm 0.041$ & $0.863 \pm 0.017$ \\
$L_1$-distance on landscapes & $0.526 \pm 0.120$ & $0.649 \pm 0.049$ \\
\hline
\end{tabular}
\caption{\footnotesize Comparison of clustering performance (mean $\pm$ standard deviation over $100$ runs of $k$-means) across different feature representations.}
\label{tab:method_comparison}
\end{table}

\indent Table~\ref{tab:method_comparison} summarises the clustering performance across all methods. The DLFM achieves the highest ARI and NMI scores among all representations considered, with ARI $0.962 \pm 0.045$ and NMI $0.882 \pm 0.016$. Notably, this performance is achieved using discrete signatures truncated at weight $3$, indicating that highly informative representations arise already at low weights. We also observe that the variability of DLFM across runs remains moderate, indicating robustness with respect to clustering initialisation.

\indent The DLFM consistently outperforms all alternative approaches. Continuous path signatures of landscapes exhibit significantly lower performance, with high variability across runs, reflecting the loss of discriminative information, likely due to tree-like equivalence. Signatures of integrated landscapes improve stability but remain less effective than DLFM. Direct vectorisation of landscapes performs comparatively well, but still falls short of DLFM, while clustering based on $L_1$ distances performs substantially worse.

\indent It is worth noting that flattened landscape representations achieve relatively strong clustering performance, with ARI and NMI close to those of DLFM. However, this comes at a significant computational cost. In our implementation, the dimension of the flattened landscape vectors is $1{,}680{,}720$, corresponding to the total number of sampled critical points across all proteins and landscape levels. In contrast, the DLFM feature vectors have dimension $8{,}015$, while continuous signature representations have dimension $3{,}616$. Thus, DLFM achieves better performance while using feature vectors that are over two orders of magnitude smaller than those of the flattened landscape approach.

In terms of computational aspects, continuous signatures are currently the fastest method among those considered, thanks to highly optimised implementations. The discrete signature computation is more expensive and typically takes on the order of minutes for the full dataset, while the naive methods based on direct landscape comparison require several hours. All experiments were performed on a machine with
13th Gen Intel Core i9-13900 CPU (32 cores, 5.6 GHz), and 31 Gi RAM, running Ubuntu Linux. Although DLFM is not the fastest method, its stronger clustering performance makes it a promising target for further optimisation. In particular, the present feature vectors are high-dimensional, and we did not perform feature selection or dimensionality reduction at the encoding stage, so there is likely room to substantially reduce the dimension without losing much discriminative power.

\indent Statistical significance of the reported results was checked using a permutation test, obtaining p-values $\leq 0.001$. We further investigated how well the discrete signature captures structural classes by running a {\em centroid-based permutation test with separation ratio}: we compute the {\em average signature} within each structural class, and we compute the {\em separation ratio}, i.e. between-class/within-class distances. This is analogous to the test used in \cite{benjamin2023homology} using average landscapes. We obtain a separation ratio of $2.49$ with a p-value $< 0.001$. This test is independent of the k-means clustering. We refer to the GitHub repository for details on the permutation test and separation ratio.

\indent Overall, these results provide strong empirical evidence that DLFM yields the most informative representation among the methods considered. The fact that DLFM captures sequence similarity, which is a biochemical property, from homological features alone, indicates that the geometric information encoded in the DLFM is biologically meaningful.

\indent We can visualise an instance of clustering using the DLFM in Figure \ref{fig:histogram}: the signature-based clustering separates all structural class representatives except 3ZNC (in orange -- not dominant in any cluster) and 1FUG (in mauve -- dominant in two clusters). However, these exceptions are acceptable and justified by some particular features of the protein dataset. Indeed, the protein class 3ZNC (orange) is broken, presenting missing parts in the amino-acids chain, and it has a similar structure chain to 4QEF (navy blue), as they are both carbamoyltransferases.

\begin{figure}
    \centering
    \includegraphics[width=0.7\linewidth]{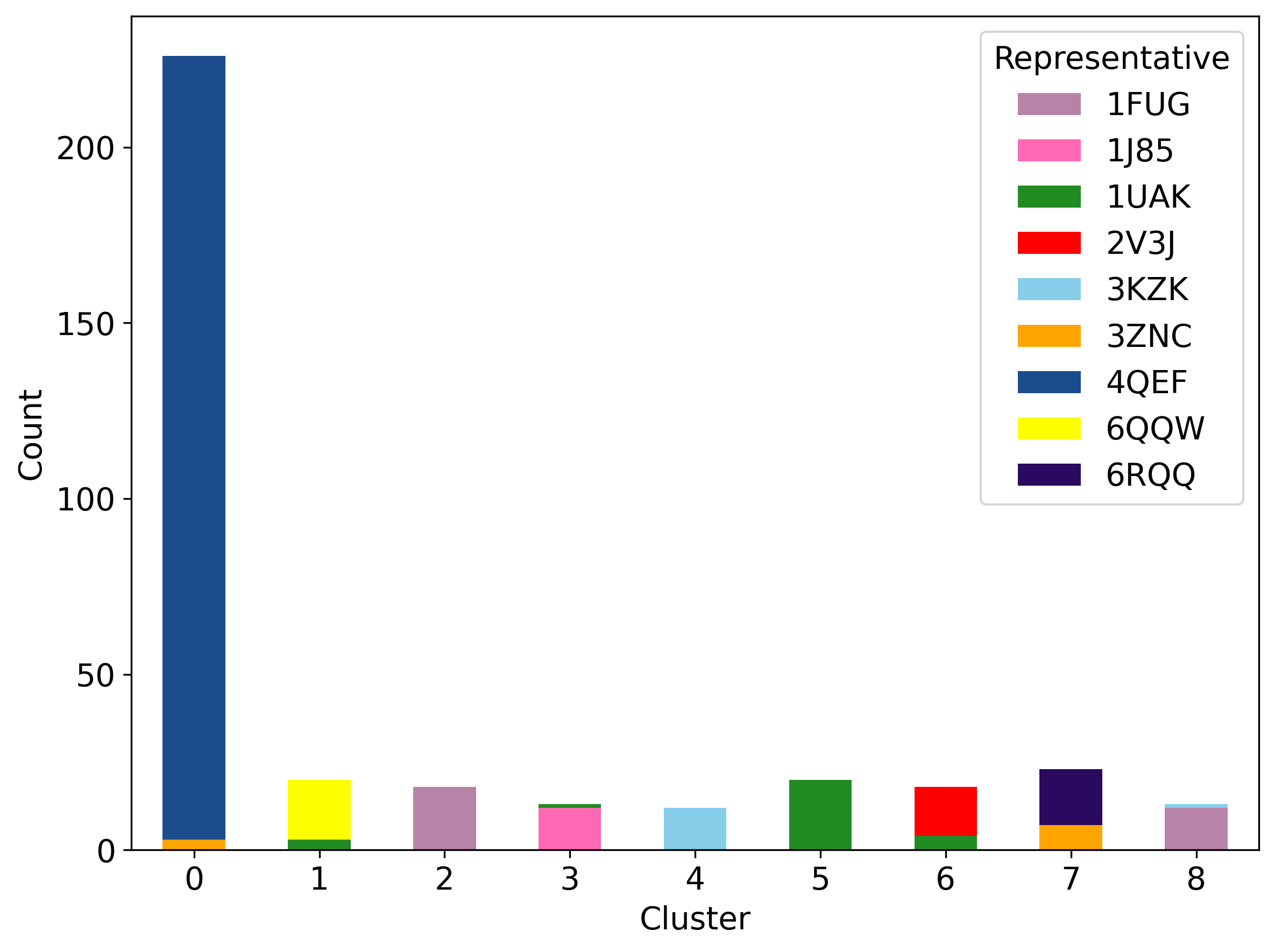}
    \caption{{\footnotesize Example of k-means clustering using DLFM features with $15$ landscape levels, signature weight $\leq 3$, and $9$ clusters. Colors correspond to sequence similarity classes, illustrating strong agreement between the clustering obtained from discrete signatures and the biological labeling.}}
    \label{fig:histogram}
\end{figure}

\paragraph{Discrete signatures for predicting knot depth.}
We now demonstrate that DLFM features can be used more generally as input to standard machine learning methods, beyond simple clustering, by considering a different task: predicting the \textit{knot depth} (\ref{eq:knot_depth}).

\indent We investigate whether DLFM features encode information about knot depth using two complementary approaches. First, we compute the {\em Spearman rank correlation} between pairwise distances in the DLFM feature space and differences in knot depth. This non-parametric measure assesses whether proteins that are close in feature space tend to have similar knot depth, and whether increasing distances in feature space correspond to increasing differences in depth. We obtain a correlation coefficient of $0.647$ with p-value $<0.001$, indicating a statistically significant positive association between discrete signature features and knot depth.

\indent Second, we evaluate the predictive power of DLFM features using a {\em random forest} regression model. Random forest is a learning method based on decision trees, capable of capturing nonlinear relationships between features and target variables \cite{breiman2001random}. Using DLFM feature vectors as input, the model achieves a coefficient of determination $R^2 = 0.866 \pm 0.048$, meaning that approximately $86.6\%$ of the variance in knot depth is explained by the model. Such value of $R^2$ indicates strong predictive performance and shows that the discrete signature provides a sufficiently rich representation to support accurate regression tasks.

\indent These results illustrate that DLFM feature vectors can be used effectively in a variety of machine learning settings, not only for clustering but also for predictive modelling of geometric properties.

\paragraph{Visualisation of DLFM feature space.} The DLFM feature space can be visualised using principal component analysis (PCA). Figure~\ref{fig:PCA 3D} shows the projection onto the first three principal components, colored by sequence similarity (left) and knot depth (right) following the convention used in \cite{benjamin2023homology}: proteins with $D(p)>0.05$ are considerd deeply knotted, those with $D(p)<0.005$ are shallowly knotted, and the remainder are classified as neither. 

While these components explain $70.9\%$ of the total variance, the resulting embeddings still exhibit visible organisation according to both labelings, providing qualitative support for the quantitative results obtained above. As in the Isomap visualisations of \cite{benjamin2023homology}, these projections reveal meaningful structure, while also indicating that the underlying organisation is not fully captured in low-dimensional embeddings. Further visualisations are available in the accompanying repository.

\begin{figure}
    \centering
    \includegraphics[width=1\textwidth]{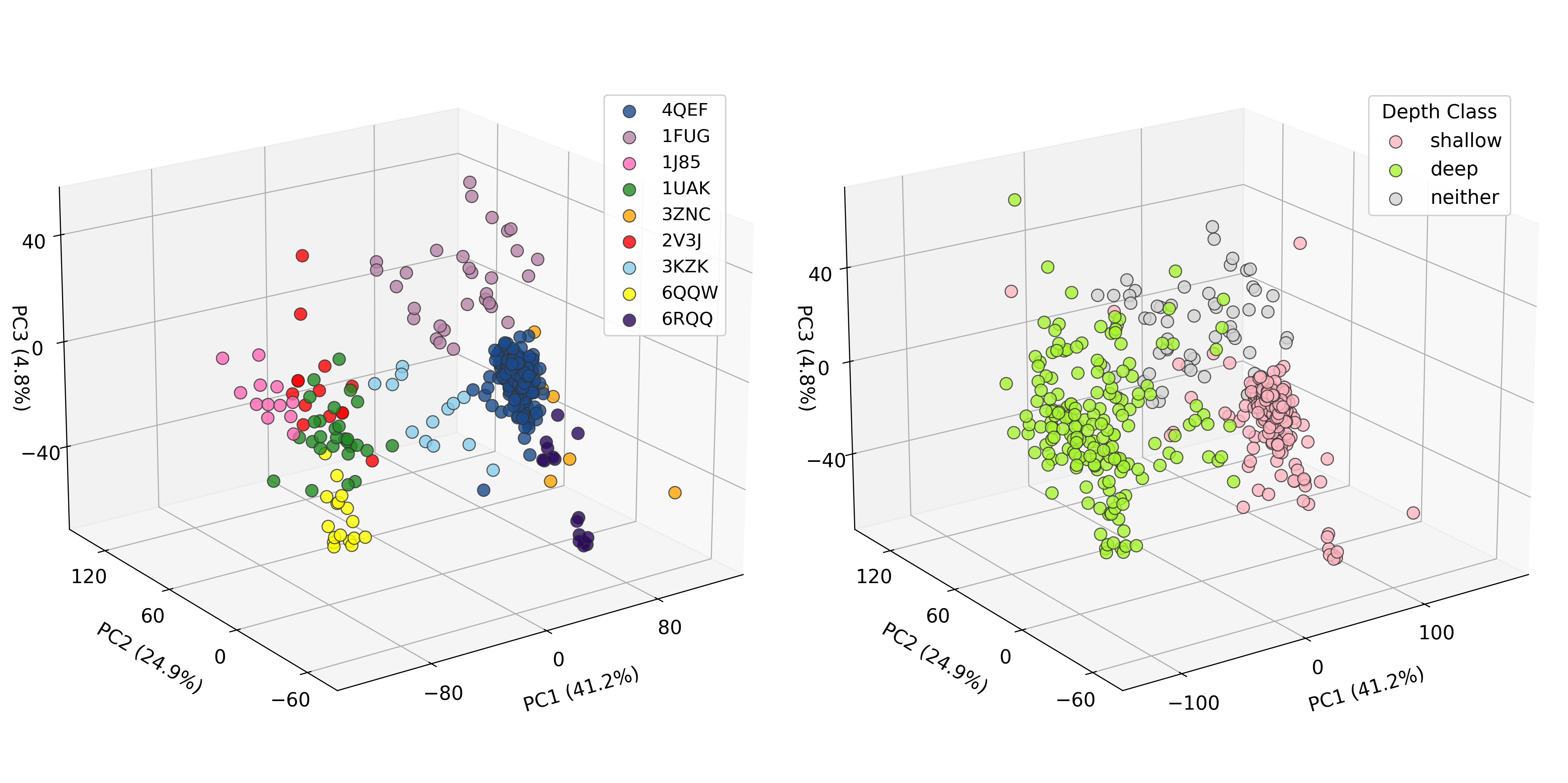}
    \caption{{\footnotesize Three-dimensional PCA projection of DLFM feature vectors. Points are colored by structural class representative (left) and by knot depth classification (right). The embedding reveals visible organisation with respect to both labelings, providing a qualitative illustration of the structure captured by the discrete signature.}}
\label{fig:PCA 3D}
\end{figure}

The experiments above demonstrate that the discrete landscape feature map provides a robust and informative representation of persistence landscapes for downstream data analysis. In particular, DLFM consistently outperforms existing signature-based and landscape-based approaches in recovering biologically meaningful structure, while also supporting predictive modeling of geometric quantities such as knot depth. Together with the theoretical results established earlier, this highlights the potential of discrete signatures as a powerful and flexible tool for topological data analysis in applications.

\section{Appendix: Chen signatures of persistence landscapes}\label{sec:signature for landscapes}

This section is devoted to partial results on the Chen signature tensors of persistence landscape paths from the point of view of algebraic geometry. It would be worth investigating more the geometry of such signature tensors in order to deduce algebraic-geometric descriptions of barcodes. However, this goes beyond our current scope, and we leave it for future work. In algebraic geometry it is standard and often convenient to work over $\mathbb{C}$. Therefore, we consider the signature tensors as elements in the complexification of $\T((V))$, for which we use the same notation.

\paragraph*{Varieties of signature matrices.}  Let $V$ be a $d$-dimensional vector space. For any $k\geq 1$, consider the composition of the path signature with the projection of the tensor algebra on its $k$--th graded component 
\[ \begin{matrix}
    \sigma^{(k)} : & \BV([0,L],V) & \longrightarrow & V^{\otimes k} \\
    & \gamma & \mapsto & \sigma^{(k)}(\gamma)
    \end{matrix}\]
assigning to each path its order--$k$ signature tensor. The {\em $k$--th universal signature variety} $\mathcal{U}_{d,k}$ is the Zariski closure of the (projectivisation) of the image of $\sigma^{(k)}$ inside $\mathbb P(V^{\otimes k})$ (cf.\ \cite[Sec.\ 4.3]{amendola19}). \\
\indent We focus on the case $k=2$, namely {\em varieties of signature matrices}. By construction, these lie in the universal signature varieties $\mathcal{U}_{d,2} \subset \mathbb P(V \otimes V)$. Given a path $\gamma \in \BV([0,L],V)$, the second signature $\sigma^{(2)}(\gamma)\in V^{\otimes 2} \simeq \mathbb C^{d\times d}$ is a matrix with entries
\begin{equation}\label{eq:entry signature matrix}
    \sigma_{ij}^{(2)}(\gamma) = \int_{0}^L \gamma_i(s)\dot{\gamma}_j(s)ds \ . 
    \end{equation}
We denote the symmetric and skew-symmetric part of $\sigma^{(2)}$ by $\sigma^{(2)}_{\rm{sym}}:=\frac{1}{2}(\sigma^{(2)}+(\sigma^{(2)})^{\bfT})$ and $\sigma^{(2)}_{\rm{skew}}:=\frac{1}{2}(\sigma^{(2)}-(\sigma^{(2)})^{\bfT})$ respectively. Applying integration by parts to \eqref{eq:entry signature matrix} leads to 
\begin{equation}\label{eq:sym part}
    2 \sigma^{(2)}_{\rm{sym}} = \gamma(L)^{\otimes 2} \ .
    \end{equation}
In fact, signature matrices are precisely those matrices with rank-$1$ symmetric part \cite[Secc.\ 2,3]{amendola19}, that is
\begin{equation}\label{eq:universal signature matrix}
\mathcal{U}_{d,2} = \left\{ v\otimes v + Q \ | \ v \in V, \ Q \in \textstyle{\bigwedge^2V} \right\} \ . 
\end{equation}

\indent Let $\mathcal{M}_{d,m}$ be the variety of signature matrices of $m$--PWL paths in $V$. Then for any $m$ \cite[Theorem 3.4]{amendola19}
\begin{equation}\label{eq:variety signature matrices}
    \mathcal{M}_{d,m} = \left\{ v \otimes v + Q \ | \ v \in V, \ \rk[v\otimes v | Q]\leq m \right\} \ , 
    \end{equation}
where $[v\otimes v | Q]$ is the matrix obtained by attaching the rank-$1$ matrix $v\otimes v$ and the matrix $Q$. The varieties $\mathcal{M}_{d,m}$ define a filtration starting at the Veronese variety $\nu_{2}(\mathbb P^{d-1})$ and stabilizing at the universal variety of signature matrices \cite[Theorem 3.4, Example 4.12]{amendola19}
\[ \nu_2(\mathbb P^{d-1}) = \mathcal{M}_{d,1} \subset \mathcal{M}_{d,2} \subset \ldots \subset \mathcal{M}_{d,d}=\mathcal{U}_{d,2} \subset \mathbb P(V^{\otimes 2}) \ .\]

\paragraph{Signature tensors of PWL loops.} Since landscapes are PWL loops, it is quite natural to first describe the signature varieties of such larger class of paths. Since signature tensors are invariant under translation, we consider loops starting and ending at $\bold{0}\in \mathbb C^d$. We denote by $\operatorname{log\sigma}(\lambda)$ the {\em log-signature} of $\lambda$ \cite{siegal2024rectifiable}, that is a sequence of tensors defined by the property that $\sigma(\lambda)=\exp(\operatorname{log\sigma}(\lambda))$, and by $\operatorname{log\sigma}^{(k)}(\lambda)$ its $k$-th entry. By definition of the exponential map, the following truncations hold:
{\small \begin{align*}
    \sigma^{(1)}(\lambda) & = \operatorname{log\sigma}^{(1)}(\lambda) \ , \\
    \sigma^{(2)}(\lambda) & = \operatorname{log\sigma}^{(2)}(\lambda) + \frac{1}{2}\left(\operatorname{log\sigma}^{(1)}(\lambda)\right)^2 \ , \\
    \sigma^{(3)}(\lambda) & = \operatorname{log\sigma}^{(3)}(\lambda) + \frac{1}{2}\left(\operatorname{log\sigma}^{(1)}(\lambda)\operatorname{log\sigma}^{(2)}(\lambda) + \operatorname{log\sigma}^{(2)}(\lambda)\operatorname{log\sigma}^{(1)}(\lambda)\right) + \frac{1}{6} \left( \operatorname{log\sigma}^{(1)}(\lambda) \right)^3 \ .
    \end{align*} }

    Each tensor space $(\mathbb{C}^d)^{\otimes k}$ admits a decomposition as direct sum in Thrall modules, see \cite[Sec.\ 3]{amendola2023decomposingtensorspacespath}. Each summand in the right-hand sides of the above equations belongs to a different Thrall module, so all summands are linearly independent.

\begin{proposition}\label{prop:landscape_loops}
    Let $\lambda$ be a PWL path with MSD $\alpha_1 \ast \cdots \ast \alpha_m$ where $\alpha_j(t)=\bold{a}_j t + c_j$ for $\bold{a}_j \in \mathbb C^d$ and suitable $c_j$ in order to make the path conjuction continuous. The following facts are equivalent:
    \begin{enumerate}
        \item $\lambda$ is a loop;
        \item $\sigma^{(1)}(\lambda) = \operatorname{log\sigma}^{(1)}(\lambda) = 0$;
        \item $\sigma^{(2)}(\lambda) = \operatorname{log\sigma}^{(2)}(\lambda) = \frac{1}{2}\sum\limits_{i<j} \bold{a}_i \wedge \bold{a}_j$;
        \item $\sigma^{(3)}(\lambda) = \operatorname{log\sigma}^{(3)}(\lambda)$.
    \end{enumerate}   
\end{proposition}
\begin{proof}
In the above notation, the first signature of the $m$-PWL path $\lambda$ is $\sigma^{(1)}(\lambda)=\bold{a}_1 + \ldots + \bold{a}_m$. The latter sum being the zero vector is equivalent to $\lambda$ being a loop. Moreover, the first signature always coincides with the first log-signature, hence $(1)\iff (2)$. The second equality in $(3)$ always holds (cf.\ \cite[Example 7.3]{siegal2024rectifiable}). If the first log-signature is zero, then $\sigma^{(2)}(\lambda)=\operatorname{log\sigma}^{(2)}(\lambda)$. On the contrary, if the first equality in $(3)$ holds, then $(\operatorname{log\sigma}^{(1)}(\lambda))^{\otimes 2} =\bold{0}$ (as matrices) implies $\operatorname{log\sigma}^{(1)}(\lambda)=\bold{0}$ (as vectors). This proves $(2)\iff (3)$. Finally, the equivalence $(2) \iff (4)$ follows from the linear independence among summands lying in different Thrall modules.
\end{proof}

\indent The above characterisations of PWL loops in terms of their signature matrices allow to describe the corresponding varieties. We denote the variety of signature matrices of $m$--PWL loops in $\mathbb C^d$ by 
\[\mathcal{M}_{d,m}^{\circlearrowright} := \overline{\left\{ \sigma^{(2)}(\lambda) \ | \ \lambda \ \text{$m$-PWL loop in} \ \mathbb C^d \right\}} \subset \mathcal{M}_{d,m} \subset \mathbb P(\mathbb C^{d}\otimes \mathbb C^d) \ , \]
and the variety of signature matrices of loops in $\mathbb C^d$ (not necessarily PWL) by 
\[ \mathcal{U}^{\circlearrowright}_{d,2}:= \overline{\left\{ \sigma^{(2)}(\lambda) \ | \ \lambda \ \text{loop in} \ \mathbb C^d \right\}} \subset \mathcal{U}_{d,2} \ . \]
Let $Gr(2,d)\subset \mathbb P(\bigwedge^2\mathbb C^d)$ the Grassmannian of planes in $\mathbb C^d$, corresponding to $d\times d$ skew-symmetric matrices of rank $2$, and let $\sigma_r(Gr(2,d))\subset \mathbb P(\bigwedge^2\mathbb C^d)$ be the $r$--th secant variety of $Gr(2,d)$, corresponding to $d\times d$ skew-symmetric matrices of rank at most $2r$.  

\begin{corollary}\label{cor:loop_variety}
In the above notation, it holds:
\begin{enumerate}
\item $\mathcal{U}^{\circlearrowright}_{d,2}= \mathbb P(\bigwedge^2V)$;
\item $\mathcal{M}_{d,m}^{\circlearrowright} = \sigma_{\lfloor \frac{m}{2}\rfloor}(Gr(2,d))$, for any $m\leq d$.
\end{enumerate}
In particular, $\mathcal{M}_{d,d+h}^{\circlearrowright}=\mathcal{U}^{\circlearrowright}_{d,2}=\mathbb P(\bigwedge^2V)$ for any $h\geq 0$.
\end{corollary}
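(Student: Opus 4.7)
The plan is to handle the three assertions in sequence, leveraging Proposition~\ref{prop:landscape_loops} throughout. For part (1), the containment $\mathcal{U}_{d,2}^{\circlearrowright}\subseteq\mathbb P(\bigwedge^2 V)$ follows at once from Proposition~\ref{prop:landscape_loops}(3): for any PWL loop the signature matrix equals $\tfrac12\sum_{i<j}\bold{a}_i\wedge\bold{a}_j$, which is skew-symmetric; since the skew-symmetric locus is Zariski closed in $V\otimes V$ and PWL loops are dense in the space of $\BV$ loops, the same conclusion extends to all loops. The reverse inclusion $\mathbb P(\bigwedge^2 V)\subseteq \mathcal{U}_{d,2}^{\circlearrowright}$ will drop out of part (2) once the stabilization of the secant filtration is invoked.

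For part (2), I would prove both inclusions. Starting from the formula $\sigma^{(2)}(\lambda)=\tfrac12\sum_{i<j}\bold{a}_i\wedge\bold{a}_j$ and introducing the polygon vertices $P_i:=\bold{a}_1+\cdots+\bold{a}_i$ (with $P_0=P_m=0$), a direct expansion rewrites the signature as the polygon area bivector $\tfrac12\sum_{i=1}^{m-2}P_i\wedge P_{i+1}$. The telescoping identity $P_i\wedge P_{i+1}+P_{i+1}\wedge P_{i+2}=P_{i+1}\wedge(P_{i+2}-P_i)$ collapses each consecutive pair of chain wedges into a single simple wedge, so $\sigma^{(2)}(\lambda)$ is exhibited as a sum of at most $\lfloor m/2\rfloor$ decomposable bivectors, giving $\mathcal{M}_{d,m}^{\circlearrowright}\subseteq\sigma_{\lfloor m/2\rfloor}(Gr(2,d))$. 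For the reverse inclusion, I would realize an arbitrary sum $\sum_{k=1}^{\lfloor m/2\rfloor}\bold{b}_k\wedge\bold{c}_k$ as the signature matrix of an explicit $m$-PWL loop by reverse-engineering the pairing above: one picks the polygon vertices $P_1,\dots,P_{m-1}$ as affine combinations of the $\bold{b}_k,\bold{c}_k$ chosen so that each paired chain wedge reproduces exactly one prescribed simple wedge. A small-model computation (e.g.\ the pentagon for $m=5$) makes the pattern transparent, and generic parameter choices place the resulting polygon inside the $m$-PWL stratum; Zariski closure then covers the degenerate locus.

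For part (3), the stabilization rests on the classical fact that $\sigma_r(Gr(2,d))=\mathbb P(\bigwedge^2 V)$ for every $r\geq\lfloor d/2\rfloor$, since every skew-symmetric $d\times d$ matrix has even rank at most $2\lfloor d/2\rfloor$ and admits a symplectic normal form as a sum of $\lfloor d/2\rfloor$ simple wedges. Therefore $m\geq d$ forces $\lfloor m/2\rfloor\geq\lfloor d/2\rfloor$ and $\mathcal{M}_{d,m}^{\circlearrowright}=\mathbb P(\bigwedge^2 V)=\mathcal{U}_{d,2}^{\circlearrowright}$, which simultaneously completes part (3) and supplies the reverse inclusion in part (1).

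The technical crux is the realization step in part (2): checking both that the polygonal ansatz sweeps out all of $\sigma_{\lfloor m/2\rfloor}(Gr(2,d))$ as the parameters vary, and that the loop stays inside the $m$-PWL stratum on a dense open subset (i.e.\ no accidental parallelism of consecutive edges forces a collapse to a smaller MSD count). The surjectivity is reduced to the small-model pentagon/parallelogram computation, while the MSD condition is an open condition and therefore fails only on a proper Zariski-closed locus, which is harmless after taking closures.
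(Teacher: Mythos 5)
Your route is genuinely different from the paper's: the paper gets both claims in one line by slicing the already-known varieties $\mathcal{M}_{d,m}$ and $\mathcal{U}_{d,2}$ from \eqref{eq:universal signature matrix} and \eqref{eq:variety signature matrices} with the skew-symmetric locus $\mathbb P(\bigwedge^2V)$, using that a path is a loop exactly when the rank-one symmetric part $\tfrac12\gamma(L)^{\otimes 2}$ of its signature matrix vanishes. For part (1) your density argument is unnecessary (and requires an unstated continuity of $\sigma^{(2)}$): identity \eqref{eq:sym part} holds for every bounded-variation path, so skew-symmetry of loop signature matrices needs no approximation by PWL loops. Your explicit polygon computation in part (2), however, is where the substance is --- and it exposes a real problem.

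The gap is in the reverse inclusion of part (2), and it is not repairable as proposed. Your telescoping pairs the $m-2$ chain wedges $P_1\wedge P_2,\dots,P_{m-2}\wedge P_{m-1}$ into $\lceil (m-2)/2\rceil$ simple wedges; this equals $\lfloor m/2\rfloor$ only for odd $m$, whereas for even $m$ it is $m/2-1$. Hence every $m$-PWL loop with $m$ even has $\sigma^{(2)}$ of rank at most $m-2$, i.e.\ lies in $\sigma_{m/2-1}(Gr(2,d))$, which is a Zariski-closed \emph{proper} subvariety of $\sigma_{m/2}(Gr(2,d))$ whenever $m\le d$. So no choice of polygon parameters (nor passing to closures) can realise a general element of $\sigma_{\lfloor m/2\rfloor}(Gr(2,d))$: already for $m=4$ the identity $P_1\wedge P_2+P_2\wedge P_3=P_2\wedge(P_3-P_1)$ forces every quadrilateral loop to have a decomposable signature matrix, while $\sigma_2(Gr(2,4))=\mathbb P(\bigwedge^2\mathbb R^4)$ contains rank-$4$ bivectors. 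In other words, your forward bound is sharper than the stated equality and contradicts it for even $m$; the correct exponent appears to be $\lfloor (m-1)/2\rfloor$, and the same off-by-one affects the final clause when $d$ is even and $h=0$. (The paper's shortcut does not detect this because intersecting the closure $\mathcal{M}_{d,m}$ with the skew locus need not equal the closure of the set of signature matrices of actual $m$-PWL loops.) For odd $m$ your realization scheme --- pentagon model, generic parameters, openness of the MSD condition --- does go through and completes that case.
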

\begin{proof}
    Up to translation, a loop $\lambda:[0,L]\rightarrow \mathbb C^d$ is such that $\lambda(0)=\lambda(L)=\bold{0}$, hence the signature matrix of $\lambda$ has symmetric part $\sigma^{(2)}_{\rm{sym}}(\lambda)=\frac{1}{2}(\lambda(L)-\lambda(0))^{\otimes 2}=\bold{0}^{\otimes 2}$. This also follows from Proposition \ref{prop:landscape_loops}. Then the two theses follow straightforward from the definitions \eqref{eq:universal signature matrix} and \eqref{eq:variety signature matrices} by cutting the varieties $\mathcal{M}_{d,m}\subset \mathcal{U}_{d,2}\subset \mathbb P(V^{\otimes 2})$ by $\mathbb P(\bigwedge^{2}V)$.
\end{proof}

\paragraph{Signature tensors of landscapes.} By construction, the number $m$ of segments in the MSD of a landscape with $d$ levels (i.e. a landscape path in $\mathbb R^d$) is {\em strictly lower bounded} by $d$, that is $d \lneq m$. Thus it makes sense to consider the (universal) variety of signature matrices of landscapes in $\mathbb R^d$ regardless of the number of linear pieces in the MSD. We denote such variety by 
\[ \mathcal{M}_{d,2}^{\LS} := \overline{\left\{ \sigma^{(2)}(\lambda) \ | \ \lambda \ \text{landscape path in} \ \mathbb R^d \right\}} \subset \mathcal{M}_{d,2}^\circlearrowright=\mathbb P\left(\textstyle{\bigwedge^2}\mathbb C^d\right) \ . \]
\indent Consider a landscape path $\lambda^B(t)=(\lambda_1(t),\ldots, \lambda_d(t))\subset \mathbb R^d$ with MSD $\lambda^B=\alpha_1\ast \cdots \ast \alpha_m$ defined on the partition $[0,t_1]\cup [t_1,t_2] \cup \ldots \cup [t_{m-1},L]$ such that each segment $\alpha_j:[t_{j-1},t_j]\rightarrow \mathbb R^d$ is defined as $\alpha_j(t)=\bold{a}_j\cdot t + \alpha_{j-1}(t_{j-1})$ for a suitable $\bold{a}_j=(a_{1j} , \ldots , a_{dj})^{\bfT}\in \{-1,0,1\}^d$. Then each $\lambda^B$ defines a $d\times m$ matrix $X_\lambda := [\bold{a}_1 | \cdots | \bold{a}_m]$.\\
\indent The conditions on the levels of a landscape (cf.\ Sec.\ \ref{subsec:preliminaries landscapes}) impose some semialgebraic conditions, besides the algebraic condition of being a loop: clearly, $\mathcal{M}_{d,2}^{\LS} \subseteq \mathcal{M}_{d,2}^\circlearrowright =\mathbb P\left(\bigwedge^2\mathbb C^d \right)$. For instance, the non-negativity of sums over each row implies $\bold{a}_1+\ldots + \bold{a}_k\geq 0$ for any $k\leq m$, while the dominance of higher levels is equivalent to $\sum_{j=1}^k a_{ij} \geq \sum_{j=1}^k a_{i+1 \, j}$. But there are also non semi-algebraic relations, such as the one on the modulus of the entries. We leave the study of the variety $\mathcal{M}_{d,2}^{\LS}$ for future work. Observe that the orbit argument in \cite[Example 2.3]{amendola19} does not apply since the number of segments $m$ is greater than $d$. \\
\textcolor{black}{\indent Observe that the space of $d$-barcodes depends on $2d-1$ parameters, as one can choose $2d$ values for the $d$ birth-death points but initialise the first birth at $0$. By the definition of levels in \eqref{eq:def level}, it follows that a barcode of $d$ intervals defines a landscape path with at most $d$ levels. Therefore, via the landscape embedding, a barcode with $d$ intervals defines a (skewsymmetric) signature matrix of size $d\times d$. Assuming that such mapping is injective, then the variety $\mathcal{M}_{d,2}^{\LS}\subseteq \mathbb P(\bigwedge^2\mathbb C^d)$ has dimension at most $2d-1$. A dimensional count implies that the variety $\mathcal{M}_{d,2}^{\LS}$ does not fill the ambient space for every $d\geq 6$. It would be interesting to determine such variety.}

\begin{conjecture}
For $d\leq 5$ the variety of signature matrices of landscapes $\mathcal{M}_{d,2}^{\LS}$ coincides with the variety of signature matrices of loops $\mathcal{M}_{d,2}^{\circlearrowright}=\mathbb P(\bigwedge^2\mathbb C^d)$.
\end{conjecture}

{\scriptsize
\printbibliography
}

\end{document}